\documentclass[a4paper,leqno,12pt, reqno]{amsart}

% Sep 16, 2023

\usepackage{amsmath}
\usepackage{amssymb}
\usepackage{graphicx}
\usepackage{tikz-cd}
\usepackage{hyperref}
\usepackage[all]{xy}

\usepackage{xspace}
\usepackage{bm}
\usepackage{amsmath}
\usepackage{amstext}
\usepackage{amsfonts}
\usepackage[mathscr]{euscript}
\usepackage{amscd}
\usepackage{latexsym}
\usepackage{amssymb}
\usepackage{enumerate}
\usepackage{xcolor}

\setlength{\topmargin}{-10mm}
\setlength{\textheight}{9.0in}
\setlength{\oddsidemargin}{ .1 in}
\setlength{\evensidemargin}{.1 in}
\setlength{\textwidth}{6.0 in}

%\theoremstyle{plain}
%\swapnumbers
%\newtheorem{theorem}{Theorem}[section]
%\newtheorem{proposition}[theorem]{Proposition}
%\newtheorem{lemma}[theorem]{Lemma}
%\newtheorem{corollary}[theorem]{Corollary}
%\newtheorem{fact}[theorem]{Fact}

\theoremstyle{plain}
\swapnumbers
    \newtheorem{theorem}[figure]{Theorem}%[section]
    \newtheorem{proposition}[figure]{Proposition}
    \newtheorem{lemma}[figure]{Lemma}
    
    \newtheorem{corollary}[figure]{Corollary}
    
    \newtheorem{subsec}[figure]{}

\theoremstyle{definition}
    \newtheorem{definition}[figure]{Definition}
    \newtheorem{example}[figure]{Example}

\theoremstyle{remark}
        \newtheorem{remark}[figure]{Remark}

    \newtheorem{ack}[figure]{Acknowledgements}

%%%

\newcommand{\sect}{\setcounter{figure}{0}\section}

\renewcommand{\thefigure}{\arabic{section}.\arabic{figure}}

     %

%\newenvironment{mysubsection}[2][]
%{\begin{subsec}\begin{upshape}\begin{bfseries}{#2.}
%			\end{bfseries}{#1}}
%		{\end{upshape}\end{subsec}}

%

\setcounter{secnumdepth}{3}

\newcommand{\ZZ}{{\mathbb Z}}

\newcommand{\Aut}{\operatorname{Aut}}
\newcommand{\conn}{\operatorname{conn}}
\newcommand{\Ext}{\operatorname{Ext}}

\newcommand{\Hom}{\operatorname{Hom}}
\newcommand{\Id}{\operatorname{Id}}

\newcommand{\A}{\mathcal{A}_{\#}}
\renewcommand{\AA}{\mathcal{A}_{\ast}}
\newcommand{\AAA}{\mathcal{A}^{\ast}}
\newcommand{\NA}{N\A}
\newcommand{\NAA}{N\AA}
\newcommand{\NAAA}{N\AAA}

\begin{document}

\title{Homology self closeness number and cofibration sequence}
\author{Gopal Chandra Dutta}
%\author{}
%\author{}
\address{Department of Mathematics and Statistics\\
		Indian Institute of Technology, Kanpur\\ Uttar Pradesh 208016\\India}
	
\email{gopald@iitk.ac.in}

\date{\today}

\subjclass[2020]{Primary 55P10, 55Q05 ; \ Secondary: 55P05, 55P30, 55S45.}
\keywords{self-homotopy equivalence, cofibration, self-closeness number, homology decomposition, homotopy decomposition}

\begin{abstract}
We study the homology self-closeness numbers of simply connected CW complex and those of the homotopy cofiber. Self-maps of spaces in cofibrations which appear in Homology decomposition are studied. We also consider Postnikov tower and studied the relation of homology self-closeness numbers between homology decomposition and homotopy decomposition. 
\end{abstract}
\maketitle

\maketitle

\sect{Introduction}
For a pointed space $X$, the set of homotopy classes of base point preserving maps $X\to X$ is denoted by  $[X,X]$. This is a monoid under composition of maps. The subset of all invertible elements of this monoid is denoted by $\Aut(X)$. It is the group of homotopy classes of self-equivalences on $X$. For notational convenience, we do not distinguish between a map $f \colon X\to X$ and that of its homotopy class in $[X, X]$. The group $\Aut(X)$ has been studied by several authors (cf. \cite{AGSE, RGSHD}). Choi and Lee introduced the following sub-monoids in \cite{CLCNGS}:
$$\A^k(X): = \big\{f\in [X,X]: ~f_{\#}\colon \pi_i(X)\xrightarrow{ \cong } \pi_i(X), \text{ for all } 0\leq i\leq k \big\}.$$ 
For connected CW complex $X$, we have the chain: $$[X,X] = \A^0(X) \cdots \supseteq \A^k(X) \supseteq \A^{k+1}(X) \cdots \supseteq \A^{\infty}(X) = \Aut(X).$$
They introduced a numeric homotopy invariant associated to the chain, called homotopy  self-closeness number $\NA(X)$ defined as $$\NA(X): = \min \big\{k\geq 0: ~\A^k(X) = \Aut(X) \big\}.$$
Similar kind of monoids $\AA^k(X)$ and $\AAA_k(X)$ for homology and cohomology were introduced in \cite{OYSCFC}.
$$\AA^k(X): = \big\{f\in [X,X]: ~f_{\ast}\colon H_i(X)\xrightarrow{ \cong } H_i(X), \text{ for all } 0\leq i\leq k \big\};$$
$$\AAA_k(X): = \big\{f\in [X,X]: ~f^{\ast}\colon H^i(X)\xrightarrow{ \cong } H^i(X), \text{ for all } 0\leq i\leq k \big\}.$$
We know that for a simply connected CW-complex, a self-map is a homotopy equivalence if and only if it is a homology equivalence (or a cohomology equivalence). Therefore, we have the two types of chain for simply connected CW complex $$[X,X] = \AA^0(X) \cdots \supseteq \AA^k(X) \supseteq \AA^{k+1}(X) \cdots \supseteq \AA^{\infty}(X) = \Aut(X),$$ and 
$$[X,X] = \AAA_0(X) \cdots \supseteq \AAA_k(X) \supseteq \AAA_{k+1}(X) \cdots \supseteq \AAA_{\infty}(X) = \Aut(X).$$ 
This motivates to define \emph{homology and cohomology self-closeness numbers} (cf. \cite{OYSCFC}) 
$$\NAA(X): = \min \big\{k\geq 0: ~\AA^k(X) = \Aut(X) \big\},$$ 
$$\NAAA(X): = \min \big\{k\geq 0: ~\AAA_k(X) = \Aut(X) \big\}.$$
In \cite{OYSF}, Oda and Yamaguchi studied the homotopy self-closeness number of the following type of a \emph{fibration sequence}: 
\begin{equation}\label{eq1}
 \cdots \to K(G,n+1)\to X\to Y\xrightarrow{\gamma} K(G,n+2),
\end{equation}
where $G$ is an abelian group and $K(G,n)$ is the Eilenberg-MacLane space of type $(G,n)$. Note that $X$ is the homotopy fiber of the map $\gamma$. 
 
In 2021, Li studied the cohomology self-closeness number of the following type of \emph{cofibration sequence}:
\begin{equation}\label{LICS}
A\xrightarrow{\gamma} X\xrightarrow{\iota}Y\to \Sigma{A}.
\end{equation}
He obtained the relations between cohomology self-closeness numbers of $X$ and $Y$ (cf. \cite{LCHSCS}).
 
Recall that for an abelian group $G$ and integer $n\geq 2$, the Moore space $M(G,n)$ of type $(G,n)$ is the simply connected CW-complex $M(G,n)$ unique upto homotopy, such that 

$$
\tilde{H}_i(M(G,n)) = \begin{cases}
	G &\text{if} ~i=n, \\
	0 &\text{if} ~ i\neq n.
\end{cases} 
$$
If $G$ is free-abelian, then $M(G,n)$ is just the wedge of copies of $S^n$ and $M(\ZZ_m, n) = S^n\cup_{m} e^{n+1},$ where $(n+1)$-cell is attached to $S^n$ by a degree $m$ map. Thus, for a finitely generated $G$, Moore space $M(G,n)$ is a finite CW-complex of dimension $n$ if $G$ is free-abelian and of dimension $n+1$ if $G$ has torsion.
Note that $\pi_n(X; G) := [M(G,n), X]$ is a group for $n\geq 2$ and abelian group for $n\geq 3$. It is called the \emph{$n$-th homotopy group of $X$ with coefficients in $G$}. 

In this paper, we consider the dual \emph{cofibration sequence} of Equation \ref{eq1}:

\begin{equation}\label{eq2}
M(G,n)\xrightarrow{\gamma} X\xrightarrow{\iota}Y\to M(G,n+1)\to \cdots.
\end{equation}
Here $n\geq 2$, $Y$ is the homotopy cofiber of the map $\gamma$ and $\iota$ is a cofibration. This sequence is a particular case of the cofibration sequence \ref{LICS}. The group $\Aut(Y)$ was studied by Oka, Sawashita and Sugawara in \cite{OSSGSM}. The homotopy self-closeness number corresponding to the cofibration sequence \ref{eq2} was studied in \cite{OYSEC} by considering $G = \mathbb{Z}$.

In Section \ref{sec2}, we establish relations between $\NAA(X)$ and $\NAA(Y)$. Indeed, we find $\NAA(Y)$ by utilizing the given map $\gamma$ and making certain assumptions about $X$.
First, we prove the inequality $\NAA(X)\leq \NAA(Y)$, when the induced homomorphism $\gamma_{\#}\colon \pi_n(M(G,n);G) \to\pi_n(X;G)$ is an epimorphism (see Theorem \ref{ithm1}).

Moreover, the reverse inequality $\NAA(Y)\leq \NAA(X)$ holds if the induced homomorphism $\gamma_{\#}\colon \pi_n(M(G,n);G) \to \pi_n(X;G)$ is a monomorphism and  $\gamma^{\#}\big(\Aut(X)\big) \subseteq\gamma_{\#}\big(\pi_n(M(G,n);G)\big)$  (see Theorem \ref{ithm2}).

Further, we get the equality $\NAA(X) = \NAA(Y)$ if the induced homomorphism $\gamma_{\#}\colon \pi_n(M(G,n);G)\to \pi_n(X;G)$ is an isomorphism (see Theorem \ref{ehsn}).

As a special case, assume that $\pi_n(M(G,n);G)\cong \mathbb{Z}/{q\mathbb{Z}}\langle \Id_M \rangle$ and $\pi_n(X,G) \cong \mathbb{Z}/{m\mathbb{Z}}\langle \gamma \rangle$, where $q,m \geq 0$. Then
\begin{enumerate}[{(a)}]
\item $\NAA(X)\leq \NAA(Y), ~~ \text{if}~q = 0\neq m~ \text{or}~ q>m.$
\item $\NAA(X) = \NAA(Y)$ if $q = m$ and $\conn(X)\geq 2~ (\text{or}~ G~\text{is free})$.
\end{enumerate}
(see Proposition \ref{prop1}).

By taking $G = \mathbb{Z}/{p \mathbb{Z}}$ for a prime $p\neq2$, we obtain the relation $\NAA(Y)\leq \NAA(X)$ under some conditions (see Proposition \ref{prop2}).

In Section \ref{sec3}, we consider the \emph{homology decomposition} $\{X_n\}$ of a simply connected CW-complex $X$, where $X_n$ is the $n$-th \emph{homology section} (cf. \cite[Section 7.3]{AIH}). In Theorem \ref{rshsn}, we obtain the relations between homology self-closeness numbers of consecutive homology sections. Moreover, in Lemma \ref{egshe}, we prove the relationship between the group of self-homotopy equivalences of $X$ and $X_n$. Using this Lemma, we obtain the relation between the two numbers $\NAA(X)$ and $\NAA(X_n)$ under some conditions, see Theorem \ref{echd}.

In Section \ref{sec4}, we consider the \emph{homotopy decomposition (Postnikov tower)} $\{X^{(n)}\}$ of a simply connected CW-complex $X$, where $X^{(n)}$ is the $n$-th \emph{homotopy section}. The main aim of this section is to obtain relations between homology and homotopy self-closeness numbers and their corresponding decomposition. In Lemma \ref{ehhsn}, we obtain a relation between the homology self-closeness numbers of homotopy section and homology section. Theorem \ref{hsnchs} gives the analogous results of Theorem \ref{rshsn} for homotopy sections. Choi and Lee have studied the relation of homotopy self-closeness numbers of $X$ and $X^{(n)}$ \cite[Theorem 3.5]{CLSCWHD}. We find the homological version of the result in Theorem \ref{hshd}. We prove the equality $\NAA(X^{(n)}) = \NAA(X_n)$ under some conditions in Corollary \ref{ehshh}. Finally, we compute homology self-closeness numbers of homotopy sections (Postnikov tower) for some CW-complexes by the help of homology decomposition.

\begin{ack}
The author would like to thank IIT Kanpur for Ph.D fellowship.
\end{ack}

\sect{Homology self-closeness number}\label{sec2}
In this section, we study the relation between the homology self-closeness numbers of $X,Y$ for the cofibration sequence of Equation \ref{eq2}.
Recall that the \emph{connectivity} of a space is defined as $$\conn(X):= \min \big\{k\geq 0~:~ \pi_{k+1}(X)\neq 0\big\}.$$ Thus, $\pi_k(X) = 0$ for all $k\leq \conn(X)$. Moreover, we denote the \emph{homological dimension} by
$$H_{\ast}\text{-}\dim(X):= \max\big\{k\geq 0~:~ H_k(X)\neq 0\big\}.$$ Therefore, $H_k(X) = 0$ for all $k> H_{\ast} \text{-}\dim(X)$. For any simply connected CW-complex $X$, we have $$2 = \conn(X) + 1\leq \NAA(X)\leq H_{\ast} \text{-} \dim(X).$$

\noindent Recall that, a map $f\colon X\to X'$ is called \emph{homotopical $n$-equivalence} if $f_{\#}\colon \pi_k(X)\to \pi_k(X')$ is isomorphism for all $k<n$ and an epimorphism for $k=n$. A map $f\colon X\to X'$ is called \emph{homological $n$-equivalence} if $f_{\ast}\colon H_k(X)\to H_k(X')$ is isomorphism for all $k<n$ and an epimorphism for $k=n$.

If $X'$ is simply connected then the mapping cone $C_f$ of a map $f\colon X\to X'$ is simply connected (cf \cite[Proposition B.4 ]{AIH}).

\begin{lemma}\label{leq}
Let $f\colon X\to X'$ be a map between simply connected spaces. Then the following are equivalent:
\begin{enumerate}[{(a)}]
	\item $f$ is homotopical $n$-equivalence.
	\item $f$ is homological $n$-equivalence.
	\item $C_f$ is $n$-connected.
\end{enumerate}
\end{lemma}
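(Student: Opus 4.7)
The plan is to reduce all three conditions to statements about the pair $(M_f, X)$, where $M_f$ is the mapping cylinder of $f$, and then compare them using the long exact sequences of a pair together with the absolute and relative Hurewicz theorems.

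First I would replace $f$ by the cofibration $i\colon X \hookrightarrow M_f$, noting that $M_f \simeq X'$ and that $M_f/X$ is (homotopy equivalent to) $C_f$. From the long exact sequence of the pair $(M_f, X)$ in homotopy, condition (a) is equivalent to the vanishing $\pi_k(M_f, X) = 0$ for all $k \leq n$. From the long exact sequence in homology, condition (b) is equivalent to $H_k(M_f, X) = 0$ for all $k \leq n$, which by excision (since $(M_f, X)$ is a CW pair) is equivalent to $\tilde{H}_k(C_f) = 0$ for all $k \leq n$.

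Next, for (b) $\Leftrightarrow$ (c) I would invoke the absolute Hurewicz theorem: since the excerpt records that $C_f$ is simply connected (using that $X'$ is simply connected), $C_f$ is $n$-connected iff its reduced homology vanishes through degree $n$, which is precisely the reformulation of (b) above. For (a) $\Leftrightarrow$ (b) I would use the relative Hurewicz theorem, which applies because $X$ is simply connected and, by the long exact sequence, $\pi_1(M_f, X) = 0$, so the pair is $1$-connected. The direction (a) $\Rightarrow$ (b) needs only one application: if $(M_f, X)$ is $n$-connected, relative Hurewicz yields $H_k(M_f, X) = 0$ for $k \leq n$. For (b) $\Rightarrow$ (a), I would induct on $k \leq n$: assuming $\pi_j(M_f, X) = 0$ for all $j < k$, the pair is $(k-1)$-connected, so relative Hurewicz gives $\pi_k(M_f, X) \cong H_k(M_f, X) = 0$.

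The steps themselves are standard, so there is no deep obstacle. The only point that needs a small amount of care is checking that the hypotheses of the relative Hurewicz theorem are in force at each stage of the induction, namely the simple connectivity of the pair $(M_f, X)$; this is immediate from the simple connectivity of $X$ and $X'$. Once that bookkeeping is done, the three chains of equivalences fit together to give (a) $\Leftrightarrow$ (b) $\Leftrightarrow$ (c).
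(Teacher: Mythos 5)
Your argument is correct. The paper itself gives no argument at all: its ``proof'' is a one-line citation to Whitehead's theorem \cite{JHCWCH} and to \cite[Lemma 6.4.12]{AIH}. What you have written is, in effect, the standard proof underlying those references, spelled out: replace $f$ by the cofibration $X\hookrightarrow M_f$, identify (a) with $\pi_k(M_f,X)=0$ for $k\leq n$ and (b) with $H_k(M_f,X)=0$ for $k\leq n$ via the two long exact sequences, use excision (a good/CW pair) to identify $H_k(M_f,X)$ with $\tilde H_k(C_f)$, and then close the circle with the absolute Hurewicz theorem for the simply connected space $C_f$ and the relative Hurewicz theorem for the $1$-connected pair $(M_f,X)$, the latter applied inductively for (b)~$\Rightarrow$~(a). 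The bookkeeping you flag is exactly the right point to watch: simple connectivity of $X$ guarantees the triviality of the $\pi_1$-action needed for relative Hurewicz, and $\pi_1(M_f,X)=0$ follows from $\pi_1(M_f)\cong\pi_1(X')=0$ and connectedness of $X$, so the induction goes through. The trade-off is the obvious one: the paper's citation is economical, while your version is self-contained and makes visible exactly where simple connectivity of $X$ and $X'$ is used; both establish the same equivalences.
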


\begin{proof}
Follows from \cite[Theorem 4]{JHCWCH} and \cite[Lemma 6.4.12]{AIH}.

\end{proof}

From now onwards, $X$ will denote a simply connected CW-complex and $G$ a finitely generated abelian group.

%\begin{lemma}\label{lcd}
%Let $n\geq 2$ and  $M(G,n)\xrightarrow{\gamma}X\xrightarrow{\iota} Y$ be a cofibration with %$\conn(X)\geq 2$. Given self-maps $g\colon X\to X,~f\colon Y\to Y$ such that $f\circ \iota = \iota\circ g$, there exists a map $h\colon M(G,n)\to M(G,n)$ such that $\gamma\circ h = g\circ \gamma$.
%\end{lemma}

%\begin{proof}
%Observe that $\iota\colon X\to Y$ is homological $n$-equivalence and hence homotopical $n$-equivalence by Lemma \ref{leq}. Therefore $\conn(X)\geq 2$ implies that $\conn(Y)\geq 2$. Note that $\dim\big(M(G,n)\big)\leq n+1$. Therefore using  \cite[Lemma 2.3 ]{LCHSCS} we get the desired result.

%\end{proof}	

%\begin{remark}
%If $G$ is a free abelian group then $\conn(X)\geq 1$ is sufficient instead of $\conn(X)\geq 2$ for the Lemma \ref{lcd}.
%\end{remark}

\begin{theorem}\label{ithm1}
$M(G,n)\xrightarrow{\gamma} X \xrightarrow{\iota} Y$ be a cofibration sequence such that the induced homomorphism $\gamma_{\#}\colon \pi_n(M(G,n);G)\to \pi_n(X; G)$ is an epimorphism. Then
\begin{enumerate}[{(a)}]
\item  $2\leq \NAA(X)\leq \NAA(Y)\leq n+1$ if $H_{\ast}\text{-}\dim(X)< n$.
\item $n\leq \NAA(Y)$ if $H_n(X) = H_{n+1}(X) = 0$ and $2\leq n\leq \NAA(X)$.
\end{enumerate}
\end{theorem}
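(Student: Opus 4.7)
The plan is to attack parts (a) and (b) in parallel, with the key technical move in both cases being to extend a self-map of $X$ to a self-map of $Y$ along the given cofibration, using the surjectivity of $\gamma_{\#}$. Before doing so I would compute $H_{\ast}(Y)$: since $\tilde{H}_{\ast}(M(G,n))$ is $G$ in degree $n$ and zero elsewhere, and $H_i(X)=0$ for $i\ge n$ under the hypothesis $H_{\ast}\text{-}\dim(X)<n$, the cofibre long exact sequence collapses to $\iota_{\ast}\colon H_i(X)\xrightarrow{\cong}H_i(Y)$ for $i\le n-1$, $H_n(Y)=0$, $H_{n+1}(Y)\cong G$, and $H_i(Y)=0$ for $i\ge n+2$. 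This yields $H_{\ast}\text{-}\dim(Y)\le n+1$ and hence $\NAA(Y)\le n+1$; the inequality $\NAA(X)\ge 2$ is the standard one for simply connected spaces.

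For $\NAA(X)\le\NAA(Y)$ in (a), let $f\in\AA^{\NAA(Y)}(X)$. Because $H_i(X)=0$ for $i\ge n$, it suffices to verify that $f_{\ast}$ is an isomorphism on $H_i(X)$ for every $i\le n-1$. If $\NAA(Y)\ge n-1$ this is already part of the hypothesis on $f$, so assume $\NAA(Y)\le n-2$. I would then use surjectivity of $\gamma_{\#}\colon \pi_n(M(G,n);G)\to\pi_n(X;G)$ to pick $g\colon M(G,n)\to M(G,n)$ with $\gamma g\simeq f\gamma$, so that $\iota f\gamma\simeq\iota\gamma g\simeq \ast$. The cofibration sequence then provides $\bar f\colon Y\to Y$ with $\bar f\iota\simeq\iota f$. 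Using the iso $\iota_{\ast}$ in degrees $i\le n-1$ and the vanishing $H_n(Y)=0$, one reads off $\bar f\in\AA^{\NAA(Y)}(Y)=\Aut(Y)$; pulling the homology isos of $\bar f$ back through $\iota_{\ast}$ for $i\le n-1$ gives $f_{\ast}$ an isomorphism on every $H_i(X)$, hence $f\in\Aut(X)$.

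For (b) I would argue by contradiction. Assume $\NAA(Y)\le n-1$ and pick $h\in\AA^{n-1}(X)\setminus\Aut(X)$, which exists because $n\le\NAA(X)$. The same extension argument produces $\bar h\colon Y\to Y$ with $\bar h\iota\simeq\iota h$, and the isomorphisms $\iota_{\ast}$ in degrees $\le n-1$ force $\bar h\in\AA^{n-1}(Y)=\Aut(Y)$. To reach a contradiction I would then transport the homology isos of $\bar h$ back to $h$: for $i\le n-1$ via $\iota_{\ast}$; for $i\in\{n,n+1\}$ trivially because $H_n(X)=H_{n+1}(X)=0$; and for $i\ge n+2$ the vanishing of both $H_i(M(G,n))$ and $H_{i-1}(M(G,n))$ again forces $\iota_{\ast}\colon H_i(X)\xrightarrow{\cong}H_i(Y)$, so $\bar h_{\ast}$ iso yields $h_{\ast}$ iso. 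Hence $h\in\Aut(X)$, contradicting the choice of $h$.

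The main obstacle in the whole argument is the extension step: one has to set up the correct homotopy-commutative ladder of cofibre sequences so that naturality of the LES transports homology information in both directions between $X$ and $Y$. The epimorphism hypothesis on $\gamma_{\#}$ is exactly what makes the ladder exist; once it is in place, the rest is careful book-keeping against the fact that $\tilde{H}_{\ast}(M(G,n))$ is concentrated in a single degree.
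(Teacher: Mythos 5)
Your proposal is correct and follows essentially the same route as the paper: in both parts you use surjectivity of $\gamma_{\#}$ to make $\iota\circ f\circ\gamma$ nullhomotopic, extend over the cofiber via the Puppe sequence to get $\bar f\colon Y\to Y$ with $\bar f\circ\iota\simeq\iota\circ f$, and then transport homology isomorphisms back and forth through $\iota_{\ast}$, exactly as in the paper's ladder argument. The only (harmless) deviation is in (b), where the paper concludes via the natural isomorphism $H_{n+1}(Y)\cong H_n(M(G,n))$, the induced self-equivalence of the Moore space, and the five lemma, whereas you reach the same conclusion by observing directly that $\iota_{\ast}$ is an isomorphism in every degree except $n$ and $n+1$, where $H_{\ast}(X)$ vanishes anyway.
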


\begin{proof}
\begin{enumerate}[{(a)}]\setlength\itemsep{1.5 em}
\item Note that $\NAA(X)\leq H_{\ast}\text{-}\dim(X)< n.$ Moreover, $H_k(Y) = 0$ for all $k\geq n+2$ using the long exact sequence of homology groups of the cofibration sequence. Therefore, $\NAA(Y)\leq n+1$.
Assume that $\NAA(Y) = m$ and $g\in \AA^m(X)$. Let $m< n$, otherwise we get the desired result.

Since $\gamma_{\#}$ is epimorphism, so there exists $h\colon M(G,n)\to M(G,n)$ such that $g\circ \gamma = \gamma \circ h$. Therefore, there is a map $f\in [Y,Y]$ such that $f\circ \iota = \iota \circ g$. Hence, we get a homotopy commutative diagram 

\begin{align}\label{dia1}
	\xymatrix{
	M(G,n) \ar[dd]_{h}  \ar[rr]^{\gamma}  &&  X \ar[dd]^{g} \ar[rr]^{\iota} && Y \ar[dd]^{f} \\\\
	M(G,n) \ar[rr]^{\gamma} && X \ar[rr]^{\iota} && Y
    }
\end{align}
Note that $\iota_{\ast}\colon H_k(X)\to H_k(Y)$ is isomorphism for all $k\leq n-1$. Therefore, $f_{\ast}\colon H_k(Y)\to H_k(Y)$ is isomorphism for all $k\leq m$. So $f\in \AA^m(Y) = \Aut(Y)$. Hence, using the commutativity of the right side diagram, we have $g\in \AA^{n-1}(X) = \Aut(X)$. Consequently, $\NAA(X)\leq m = \NAA(Y)$ and we get the desired result.
\item Let $\NAA(Y) = m$ and $g\in \AA^m(X)$. As in the proof of part (a), we have the diagram \ref{dia1}. If possible, assume that $m < n$. Therefore, $f\in \AA^m(Y) = \Aut(Y)$. Observe the following commutative diagram: 

\begin{align}
	\xymatrix{
		H_{n+1}(Y) \ar[dd]_{f_{\ast}}  \ar[rr]^{\cong}  &&  H_n(M(G,n)) \ar[dd]^{h_{\ast}} \\\\
		H_{n+1}(Y) \ar[rr]^{\cong} && H_n(M(G,n))
	}
\end{align}
Thus $h\in \Aut(M(G,n))$. Therefore, using the five Lemma on the long exact sequence of homology groups of the diagram \ref{dia1}, we have $g\in \Aut(X)$. This contradicts the fact that $\NAA(X)\geq n$. Hence, $\NAA(Y) = m \geq n$.
\end{enumerate}
\end{proof}

\begin{remark}
If $\gamma = 0$, in that case $Y = X\vee M(G,n+1)$ for $n\geq 2$. This follows that the difference between $\NAA(X)$ and $\NAA(Y)$ only depends on $X$. Let $X = S^2$, then $\NAA(X) = 2< n+1 = \NAA(Y).$
\end{remark}

\begin{theorem}\label{ithm2}
$M(G,n)\xrightarrow{\gamma} X\xrightarrow{\iota} Y$ be a cofibration sequence such that the induced homomorphism $\gamma_{\#}\colon \pi_n(M(G,n);G)\to \pi_n(X;G)$ is a monomorphism and $\gamma^{\#}\big(\Aut(X)\big)$  $\subseteq\gamma_{\#}\big(\pi_n(M(G,n);G)\big)$. Then
\begin{enumerate}[{(a)}]
	\item $3\leq \NAA(Y)\leq \NAA(X)\leq n-1$ if $X$ is $2$-connected, $H_{\ast}\text{-}\dim(X)< n$ and $G$ has torsion. 
	\item $2\leq \NAA(Y)\leq \NAA(X)\leq n-1$ if $H_{\ast}\text{-}\dim(X)< n$ and $G$ is free.
\end{enumerate}
\end{theorem}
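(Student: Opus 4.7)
The plan reverses the direction of the proof of Theorem \ref{ithm1}: starting from $f\in\AA^m(Y)$ with $m=\NAA(X)$, I will first construct $g\colon X\to X$ with $\iota g\simeq f\iota$, then extract $h\colon M(G,n)\to M(G,n)$ with $g\gamma\simeq\gamma h$ from the two hypotheses on $\gamma$, and finally conclude $f\in\Aut(Y)$ via the five lemma applied to the long exact homology sequence of the cofibration.

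The two outer bounds are short. The inequality $\NAA(X)\le n-1$ is immediate from the general bound $\NAA(X)\le H_{\ast}\text{-}\dim(X)$. For the lower bound, part (b) uses only simple connectivity of $Y$, which gives $\NAA(Y)\ge 2$. In part (a), the hypotheses implicitly force $n\ge 4$ (otherwise $X$ being 2-connected with $H_{\ast}\text{-}\dim(X)<n\le 3$ collapses $X$ to a contractible space via Hurewicz, and the statement is vacuous); then $H_2(M(G,n))=0$ combined with $H_2(X)=0$ yields $H_2(Y)=0$ from the long exact homology sequence, so by Hurewicz $Y$ is 2-connected and $\NAA(Y)\ge 3$.

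For the middle inequality, set $m=\NAA(X)\le n-1$ and pick $f\in\AA^m(Y)$. Since $\iota_{\ast}\colon H_i(X)\to H_i(Y)$ is an isomorphism for $i\le n-1$ and trivially an epimorphism for $i=n$ (both groups vanish), Lemma \ref{leq} shows $\iota$ is a homotopical $n$-equivalence, hence its homotopy fibre $F$ is $(n-1)$-connected. Replacing $\iota$ by a fibration, the obstructions to lifting $f\iota\colon X\to Y$ to a map $g\colon X\to X$ along $\iota$ live in $H^{k+1}(X;\pi_k(F))$; these vanish for $k\le n-1$ because $\pi_k(F)=0$, and for $k\ge n$ because $H_i(X)=0$ for $i\ge n$ together with the universal coefficient theorem forces $H^{k+1}(X;-)=0$. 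Thus a lift $g$ exists; then $\iota_{\ast}g_{\ast}=f_{\ast}\iota_{\ast}$ and the isomorphism of $\iota_{\ast}$ for $i\le n-1$ imply $g\in\AA^m(X)=\Aut(X)$.

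The hypothesis $\gamma^{\#}(\Aut(X))\subseteq\gamma_{\#}(\pi_n(M(G,n);G))$ now produces $h\in[M(G,n),M(G,n)]$ with $g\gamma\simeq\gamma h$, and the monomorphism of $\gamma_{\#}$ makes $h$ unique. Running the same argument on $g^{-1}\in\Aut(X)$ and invoking uniqueness turns $g\mapsto h$ into a group homomorphism $\Aut(X)\to\Aut(M(G,n))$, so $h\in\Aut(M(G,n))$. The ladder with vertical maps $(h,g,f)$ commutes by construction, and the five lemma on the long exact homology sequence of the cofibration then forces $f_{\ast}$ to be an isomorphism in every degree; since $Y$ is simply connected, $f\in\Aut(Y)$, giving $\NAA(Y)\le m$. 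The main obstacle is the obstruction-theoretic lift of $f\iota$: it succeeds only because the high connectivity of $F$ (via Lemma \ref{leq}) and the vanishing of $H^{k+1}(X;-)$ for $k\ge n$ (via $H_{\ast}\text{-}\dim(X)<n$ and UCT) combine to kill the obstructions in every degree.
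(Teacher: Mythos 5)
Your proposal is correct (to the same standard of rigor as the paper) but it diverges from the paper's argument at the two key constructive steps, and the divergence is worth recording. First, to produce $g$ with $\iota\circ g\simeq f\circ\iota$ the paper replaces $X$ by a CW model of dimension at most $n$ (Hatcher, Prop.\ 4C.1) and quotes the surjectivity of $\iota_{\#}\colon[Z,X]\to[Z,Y]$ for the homotopical $n$-equivalence $\iota$; you run the underlying obstruction theory directly (fibration replacement, $(n-1)$-connected fibre, obstructions in $H^{k+1}(X;\pi_k(F))$ killed by connectivity for $k\le n-1$ and by $H_{\ast}\text{-}\dim(X)<n$ plus UCT for $k\ge n$). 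These are essentially the same mechanism, so this is a cosmetic difference. Second, and more substantively, the paper obtains $h$ with $\gamma\circ h\simeq g\circ\gamma$ from Rutter's Proposition 4.4, which is exactly where the hypotheses ``$X$ $2$-connected and $G$ has torsion'' versus ``$G$ free'' enter (they control $\dim M(G,n)$ against the connectivity of $\gamma$); only afterwards does the paper use the containment $\gamma^{\#}\big(\Aut(X)\big)\subseteq\gamma_{\#}\big(\pi_n(M(G,n);G)\big)$, applied to the homotopy inverse $\bar g$, to invert $h$. You instead first prove $g\in\AA^{m}(X)=\Aut(X)$ and then apply the containment hypothesis to $g$ and to $g^{-1}$ directly, getting both $h$ and $\bar h$, with invertibility of $h$ from the injectivity of $\gamma_{\#}$ exactly as in the paper. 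This bypasses Rutter's proposition entirely, so in your argument the $2$-connectedness/torsion-versus-free dichotomy is used only for the lower bound $3\le\NAA(Y)$ in (a) (and your vacuity remark for small $n$ is fine, since a contractible $X$ would contradict injectivity of $\gamma_{\#}$); in effect you prove the middle inequality under weaker hypotheses than the paper's route requires, which is a genuine simplification.

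One caveat, shared with the paper: at the final step you assert that the homotopy-commutative ladder $(h,g,f)$ yields a map of long exact homology sequences and apply the five lemma. The only degree where this needs care is $H_{n+1}(Y)\cong H_n(M(G,n))$, where naturality of the connecting map with respect to the \emph{given} $f$ (rather than a cofibre-induced replacement of it) is not automatic; one should, e.g., use the homotopy extension property to homotope $f$ to a map of the pair $(Y,X)$ and check that the ambiguity in the extension (the coaction of $[\Sigma M(G,n),Y]$) acts trivially on $H_{n+1}(Y)$, which is where the injectivity of $\gamma_{\#}$ is again used. Since the paper's proof states this step at exactly the same level of detail, I do not count it against you, but if you write this up you should spell it out.
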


\begin{proof}
\begin{enumerate}[{(a)}]
\item Observe that $\iota\colon X\to Y$ is homological $n$-equivalence. So homotopical $n$-equivalence by Lemma \ref{leq}. Since $H_{\ast}\text{-}\dim(X)<n$, therefore using \cite[Proposition 4C.1]{AHAT} there exits a CW-complex say $Z$ which is homotopy equivalence to $X$ and $\dim(Z)\leq n$. Thus the induced map $\iota_{\#}\colon [Z,X]\to [Z,Y]$ is onto. Moreover, let $p\colon Z\to X$ is a homotpy equivalence map with homotopy inverse $q\colon X\to Z$. Therefore, $p^{\#}\colon [X,X]\to [Z,X]$ is a bijection  map defined as $f\mapsto f\circ p$. Similarly, $q^{\#}\colon [Z,Y]\to [X,Y]$ is bijection. Consequently, the induced map $\iota_{\#}\colon [X,X]\to [X,Y]$ is surjective.	
	
Let $\NAA(X) = l<n$ and $f\in \AA^l(Y)$. By surjectivity of $\iota_{\#}\colon [X,X]\to [X,Y]$ implies that there exist some $g\in [X,X]$ such that $\iota\circ g = f\circ \iota$. Note that $\conn(X) = 2$ implies that $\conn(Y) = 2$. Therefore, the homotopical connectivity of the map $\gamma \colon M(G,n)\to X$ is 2 by Lemma \ref{leq}. Therefore, using \cite[Proposition 4.4]{RMEFCC} there exists a map $h\colon M(G,n)\to M(G,n)$ such that we get the homotopy commutative diagram \ref{dia1}.
Since $\iota_{\ast}\colon H_k(X)\xrightarrow{\cong} H_k(Y)$ for all $k\leq l$, therefore $g\in \AA^l(X)= \Aut(X)$. Let $\bar{g}\in \Aut(X)$ such that $g\circ \bar{g} = \bar{g}\circ g = \Id_X$. It is sufficient to show that $h\in \Aut(M(G,n))$.

From the assumption (ii), $\gamma^{\#}(\bar{g})\in \gamma_{\#}\big([M(G,n), M(G,n)]\big)$. Then there exits $\bar{h}\in [M(G,n), M(G,n)]$ such that $\gamma_{\#}(\bar{h}) = \gamma^{\#}(\bar{g})$. So $\gamma\circ \bar{h} = \bar{g}\circ \gamma$.  Therefore, we have $$\gamma \circ \bar{h}\circ h = \bar{g}\circ \gamma \circ h = \bar{g}\circ g\circ \gamma = \gamma,$$ and similarly, $\gamma \circ h\circ \bar{h} = \gamma$. Injectivity of $\gamma_{\#}$ implies that  $h\circ \bar{h} = \bar{h}\circ h = \Id_{M(G,n)}$. So $h\in \Aut \big(M(G,n)\big)$. Using  five Lemma on the long exact sequence of homology groups of the Diagram \ref{dia1}, we have $f\in \Aut(Y)$. Consequently, $$\NAA(Y)\leq l = \NAA(X).$$
\item Note that $\conn(X) = 1$ implies that $\conn(Y) =1$. Moreover, if $G$ is free abelian group, then $\dim(M(G,n)) = n$. Therefore, we can use \cite[Proposition 4.4]{RMEFCC} to obtain the diagram \ref{dia1}. Rest of the proof follows similarly.
\end{enumerate}
\end{proof}

\begin{theorem}\label{ehsn}
$M(G,n)\xrightarrow{\gamma} X\xrightarrow{\iota} Y$ be a cofibration sequence such that the induced homomorphism $\gamma_{\#}\colon \pi_n(M(G,n);G)\to \pi_n(X;G)$ is an isomorphism. Then
\begin{enumerate}[{(a)}]
	\item $\NAA(Y) = \NAA(X)$ if $X$ is $2$-connected, $H_{\ast}\text{-}\dim(X)< n$ and $G$ has torsion. 
	\item $\NAA(Y) = \NAA(X)$ if $H_{\ast}\text{-}\dim(X)< n$ and $G$ is free.
\end{enumerate}
\end{theorem}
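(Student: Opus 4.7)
The plan is to obtain the equality by simply combining the two inequalities already established in Theorems \ref{ithm1} and \ref{ithm2}, observing that when $\gamma_{\#}$ is an isomorphism all of the hypotheses of those theorems are automatically in force.

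First I would note that the isomorphism $\gamma_{\#}\colon \pi_n(M(G,n);G)\to \pi_n(X;G)$ is, in particular, an epimorphism. Together with the assumption $H_{\ast}\text{-}\dim(X)<n$, this is exactly the hypothesis of Theorem \ref{ithm1}(a), so one immediately gets $\NAA(X)\leq \NAA(Y)$ in both case (a) and case (b).

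Next I would verify that the two hypotheses of Theorem \ref{ithm2} also hold. Injectivity of $\gamma_{\#}$ is immediate from the isomorphism assumption. The slightly more subtle condition $\gamma^{\#}\bigl(\Aut(X)\bigr)\subseteq \gamma_{\#}\bigl(\pi_n(M(G,n);G)\bigr)$ follows because surjectivity of $\gamma_{\#}$ forces $\gamma_{\#}\bigl(\pi_n(M(G,n);G)\bigr)=\pi_n(X;G)=[M(G,n),X]$, and $\gamma^{\#}(\Aut(X))$ is trivially contained in $[M(G,n),X]$. Thus in case (a), with the additional hypotheses that $X$ is $2$-connected, $H_{\ast}\text{-}\dim(X)<n$ and $G$ has torsion, Theorem \ref{ithm2}(a) gives $\NAA(Y)\leq \NAA(X)$, while in case (b), with $H_{\ast}\text{-}\dim(X)<n$ and $G$ free, Theorem \ref{ithm2}(b) yields the same reverse inequality. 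Combining the two inequalities in each case produces the desired equality $\NAA(Y)=\NAA(X)$.

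There is really no hard step here: the whole argument is a bookkeeping exercise, and the only point that deserves explicit mention is the automatic verification of the set-theoretic containment condition of Theorem \ref{ithm2} under the surjectivity half of the isomorphism assumption on $\gamma_{\#}$. Consequently I would keep the proof short, a few lines only, citing Theorem \ref{ithm1} and Theorem \ref{ithm2} directly.
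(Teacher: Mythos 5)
Your proposal is correct and follows essentially the same route as the paper: the forward inequality comes from Theorem \ref{ithm1}(a) using that an isomorphism is in particular an epimorphism, and the reverse inequality comes from Theorem \ref{ithm2}, with the containment $\gamma^{\#}\bigl(\Aut(X)\bigr)\subseteq \gamma_{\#}\bigl(\pi_n(M(G,n);G)\bigr)$ verified exactly as you do, via surjectivity of $\gamma_{\#}$ giving $\gamma_{\#}\bigl(\pi_n(M(G,n);G)\bigr)=\pi_n(X;G)$. No gaps; your write-up matches the paper's argument.
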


\begin{proof}
From Theorem \ref{ithm1}, we have $\NAA(X)\leq \NAA(Y)$. 

For the converse part, surjectivity of the map $\gamma_{\#}\colon \pi_n(M(G,n);G)\to \pi_n(X;G)$ implies that $$\gamma^{\#}\big(\Aut(X)\big)\subseteq \pi_n(X;G) = \gamma_{\#}\big(\pi_n(M(G,n);G)\big).$$
Therefore, using Theorem \ref{ithm2}, we have $\NAA(Y)\leq \NAA(X)$. Hence, we get the desired result. 

\end{proof}

%\begin{remark}
%If $G$ is a free abelian group then the first condition of Theorem \ref{ithm2} and Theorem \ref{ehsn} can be replaced by $1\leq \conn(X)\leq H_{\ast} - \dim(X) <n$.
%\end{remark}

\begin{proposition}\label{prop1}
$M(G,n)\xrightarrow{\gamma} X\xrightarrow{\iota} Y$ be a cofibration sequence such that $H_{\ast}\text{-}\dim(X)<n$. Assume that there exist non-negative integers $q,m$ such that $$\pi_n(M(G,n);G)\cong \mathbb{Z}/{q\mathbb{Z}}\langle \Id_M \rangle,~~\pi_n(X,G) \cong \mathbb{Z}/{m\mathbb{Z}}\langle \gamma \rangle.$$ Then 
\begin{enumerate}[{(a)}]
	\item $\NAA(X)\leq \NAA(Y)$ if $q=0\neq m \text{ or } ~q>m$.
	\item $\NAA(X) = \NAA(Y)$ if $q = m$, $X$ is $2$-connected and $G$ has torsion.
	\item $\NAA(X) = \NAA(Y)$ if $q = m$ and $G$ is a free abelian group.
\end{enumerate}
\end{proposition}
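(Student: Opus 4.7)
The plan is to reduce each part of the proposition to one of the preceding structural theorems (Theorems \ref{ithm1} and \ref{ehsn}) by first pinning down the homomorphism $\gamma_{\#}\colon \pi_n(M(G,n);G)\to \pi_n(X;G)$ under the given cyclic-group data. The starting observation is simply that $\gamma_{\#}(\Id_M) = \gamma\circ \Id_M = \gamma$. Since $\Id_M$ generates $\pi_n(M(G,n);G)\cong \mathbb{Z}/q\mathbb{Z}$ and $\gamma$ generates $\pi_n(X;G)\cong \mathbb{Z}/m\mathbb{Z}$, the homomorphism $\gamma_{\#}$ sends a generator to a generator and is therefore surjective in \emph{every} one of the cases (a)--(c). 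Moreover, since $q\cdot \Id_M = 0$ forces $q\cdot \gamma = \gamma_{\#}(q\cdot \Id_M) = 0$, one has $m\mid q$ whenever $m>0$, so $\gamma_{\#}$ is (under the obvious identification) the canonical projection $\mathbb{Z}/q\mathbb{Z}\twoheadrightarrow \mathbb{Z}/m\mathbb{Z}$, with the convention $\mathbb{Z}/0\mathbb{Z} = \mathbb{Z}$.

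The second step is a direct inspection of the kernel of this projection. In the cases $q = 0\neq m$ or $q>m$ (with $m\mid q$), the element $m$ is a nonzero kernel element, so $\gamma_{\#}$ is surjective but not injective; in the case $q = m$ the map is an isomorphism. Part (a) now follows immediately: surjectivity of $\gamma_{\#}$ together with the standing hypothesis $H_{\ast}\text{-}\dim(X)<n$ is precisely the input of Theorem \ref{ithm1}(a), which yields $\NAA(X)\leq \NAA(Y)$. For part (b) the isomorphism property of $\gamma_{\#}$ combined with the $2$-connectivity of $X$, the torsion of $G$, and $H_{\ast}\text{-}\dim(X)<n$ matches exactly the hypotheses of Theorem \ref{ehsn}(a), giving $\NAA(X) = \NAA(Y)$. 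Part (c) is analogous: the isomorphism $\gamma_{\#}$ together with $G$ free abelian and $H_{\ast}\text{-}\dim(X)<n$ is the input of Theorem \ref{ehsn}(b).

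There is no substantive obstacle; the argument is a bookkeeping reduction to the preceding theorems. The only point requiring a moment's care is confirming the algebraic identification of $\gamma_{\#}$ with a projection of cyclic groups (so that its injectivity can be read off from the numerical comparison of $q$ and $m$) and checking that the side hypothesis $\gamma^{\#}(\Aut(X))\subseteq \gamma_{\#}(\pi_n(M(G,n);G))$ appearing in Theorem \ref{ithm2}---and hence invoked implicitly through Theorem \ref{ehsn} for parts (b) and (c)---is automatic, since the surjectivity of $\gamma_{\#}$ forces $\gamma_{\#}(\pi_n(M(G,n);G)) = \pi_n(X;G)$ so that the inclusion is trivial.
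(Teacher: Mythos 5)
Your proposal is correct and follows essentially the same route as the paper: verify that $\gamma_{\#}$ is surjective because it sends the generator $\Id_M$ to the generator $\gamma$ (the paper writes $g = s\gamma = \gamma_{\#}(s\Id_M)$), then invoke Theorem \ref{ithm1}(a) for part (a), and note injectivity when $q=m$ to invoke Theorem \ref{ehsn}(a),(b) for parts (b),(c). Your extra remarks (the divisibility $m\mid q$, and the observation that the side hypothesis of Theorem \ref{ithm2} is automatic from surjectivity) are harmless refinements of the same argument, the latter being exactly how the paper's proof of Theorem \ref{ehsn} proceeds.
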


\begin{proof}
\begin{enumerate}[{(a)}]
	\item  Note that $\gamma_{\#}(f_1+f_2) = \gamma\circ (f_1+f_2)\cong \gamma\circ f_1 + \gamma \circ f_2$. So $\gamma_{\#}$ is a homomorphism. It is sufficient to show that $\gamma_{\#}$ is surjective. Let $g\in \pi_n(X;G)$. Then there exists an integer $s$ such that $g = s\gamma$. Observe that $$g = s\gamma = \gamma \circ s\Id_M = \gamma_{\#}(s\Id_M).$$ Thus $\gamma_{\#}$ is surjective. Therefore using Theorem \ref{ithm1}(a), we get the desired result.
	\item Assume that $q = m$. As in the proof of part(a), we have surjectivity of the homomorphism $\gamma_{\#}$. Moreover, $\gamma_{\#}$ is injective as  $q = m$. Therefore, we get the equality using Theorem \ref{ehsn}(a).
	\item Similar as part (b).
\end{enumerate}
\end{proof}

The following Corollary is a homological version of \cite[Theorem 4]{OYSEC}.  
\begin{corollary}\label{hves}
Let $X$ be a simply connected CW-complex such that $\dim(X)\leq n-1$ and $\pi_n(X)\cong \mathbb{Z}$, where $n\geq 2$. If $\gamma\colon S^n\to X$ is a generator of $\pi_n(X)$, then $$\NAA(X) = \NAA(X\cup_{\gamma} e^{n+1}).$$
\end{corollary}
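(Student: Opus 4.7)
The plan is to deduce this directly from Proposition \ref{prop1}(c) by specializing to the coefficient group $G = \mathbb{Z}$. Since $\mathbb{Z}$ is free abelian, the Moore space $M(\mathbb{Z},n)$ is homotopy equivalent to $S^n$, so the hypothesized map $\gamma\colon S^n \to X$ fits into a cofibration sequence
$$S^n \xrightarrow{\gamma} X \xrightarrow{\iota} X\cup_\gamma e^{n+1},$$
whose homotopy cofiber $Y = X\cup_\gamma e^{n+1}$ is precisely the object whose homology self-closeness number we want to identify with that of $X$. This matches the setup of Proposition \ref{prop1} exactly.

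Next I would verify the three hypotheses of Proposition \ref{prop1}(c). First, the assumption $\dim(X)\leq n-1$ forces $H_k(X)=0$ for $k\geq n$, hence $H_\ast\text{-}\dim(X) < n$. Second, the two coefficient homotopy groups appearing in the proposition collapse to ordinary homotopy groups: $\pi_n(M(\mathbb{Z},n);\mathbb{Z}) = [S^n,S^n] = \pi_n(S^n) \cong \mathbb{Z}\langle \Id_{S^n}\rangle$, and $\pi_n(X;\mathbb{Z}) = [S^n,X] = \pi_n(X) \cong \mathbb{Z}\langle \gamma\rangle$, the latter because by hypothesis $\gamma$ is a generator of $\pi_n(X)\cong\mathbb{Z}$. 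In the notation of Proposition \ref{prop1} we therefore have $q = 0 = m$, so in particular $q = m$.

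Finally, since $G = \mathbb{Z}$ is free abelian and $q = m$, part (c) of Proposition \ref{prop1} applies and yields the equality $\NAA(X) = \NAA(X\cup_\gamma e^{n+1})$. There is essentially no obstacle here: the corollary is a clean specialization of Proposition \ref{prop1}(c), and the only bookkeeping required is the pair of identifications $M(\mathbb{Z},n)\simeq S^n$ and $\pi_n(-;\mathbb{Z}) \cong \pi_n(-)$ that bridge the general statement and this concrete instance.
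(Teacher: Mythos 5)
Your proposal is correct, and it is essentially the alternative deduction that the paper itself records in the remark immediately following the corollary. The paper's own proof does the same work by hand: it shows directly that $\gamma_{\#}\colon [S^n,S^n]\to [S^n,X]$ is a bijection (surjectivity because every $g\in[S^n,X]$ is $s\gamma=\gamma\circ s\Id_{S^n}$, injectivity because $\gamma$ generates an infinite cyclic group) and then invokes Theorem \ref{ehsn}, whereas you package exactly these verifications into the hypotheses of Proposition \ref{prop1}(c) with $G=\mathbb{Z}$, $M(\mathbb{Z},n)\simeq S^n$, $q=m=0$ (using the convention $\mathbb{Z}/0\mathbb{Z}=\mathbb{Z}$, consistent with the statement in the introduction), together with $H_{\ast}\text{-}\dim(X)\leq \dim(X)\leq n-1<n$. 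Since Proposition \ref{prop1}(c) is itself proved by reducing to Theorem \ref{ehsn}, the two routes rest on the same underlying argument; yours is marginally shorter because the bijectivity check is outsourced to the proposition, while the paper's version makes the degree-theoretic bookkeeping explicit.
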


\begin{proof}
It is enough to show that $\gamma_{\#}\colon [S^n,S^n]\to [S^n,X]$ is a bijection. For surjectivity, let $g\in [S^n,X].$ Therefore there exists an integer $s$ such that $g = s\gamma$. We know that $s\gamma = \gamma \circ s\Id_{S^n}$, where $\Id_{S^n}$ is a generator of $\pi_n(S^n)$. Thus $g = \gamma_{\#}(s\Id_{S^n})$. Hence $\gamma_{\#}$ is surjective.

For injectivity, let $f_1, f_2\in [S^n,S^n]$ such that $\gamma_{\#}(f_1) = \gamma_{\#}(f_2)$. This implies $\gamma \circ f_1 = \gamma \circ f_2$. Moreover observe that $f_1 = k_1\Id_{S^n},~~ f_2 = k_2\Id_{S^n}$ for some $k_1,k_2\in \mathbb{Z}$. Therefore $\gamma \circ k_1\Id_{S^n} = \gamma \circ k_2\Id_{S^n}$. This implies $k_1\gamma = k_2\gamma$. So that $k_1 = k_2$. Then using Theorem \ref{ehsn} we get the desired result.

\end{proof}

\begin{remark} Corollary \ref{hves} can also be deduced directly from Proposition \ref{prop1}.
\end{remark}

Recall the generalised Fruedenthal suspension theorem: Let $Y$ be an $n$-connected CW-complex and $X$ is finite dimensional CW-complex. Then the suspension map $\Sigma\colon [X,Y]\to [\Sigma X,\Sigma Y]$ is isomorphism if $\dim(X)< 2n+1$ and epimorphism if $\dim(X) = 2n+1$ (\cite[Theorem 1.21]{CSH}).

\begin{proposition}\label{}
Consider the Hopf map $S^3\xrightarrow{\eta_2} S^2$ and all its suspension map $S^{n+1}\xrightarrow{\eta_n} S^n$, where $n\geq 3$. Then 

\[\NAA(S^n\cup_{\eta_n}e^{n+2}) = \begin{cases}
	2 &\text{if} ~n=2, \\
	n+2 &\text{if}~n\geq 3.
\end{cases} \] 

\end{proposition}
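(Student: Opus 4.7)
The plan is to split according to the two cases $n=2$ and $n\ge 3$, the difference being driven by the fact that $\mathbb{CP}^{2}$ has a non-trivial cup product while its higher suspensions do not.

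For $n=2$ the space $S^{2}\cup_{\eta_{2}}e^{4}$ is homeomorphic to $\mathbb{CP}^{2}$, whose integral cohomology ring is $\mathbb{Z}[x]/(x^{3})$ with $|x|=2$. Any $f\in\AA^{2}(\mathbb{CP}^{2})$ must satisfy $f^{\ast}(x)=\pm x$, whence naturality of cup product forces $f^{\ast}(x^{2})=x^{2}$; thus $f$ is a cohomology isomorphism and, since $\mathbb{CP}^{2}$ is simply connected, a self-equivalence. Combined with the general inequality $\NAA(X)\ge \conn(X)+1=2$ recorded at the start of Section~\ref{sec2}, this gives $\NAA(\mathbb{CP}^{2})=2$.

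For $n\ge 3$ set $X := S^{n}\cup_{\eta_{n}}e^{n+2}$. The upper bound $\NAA(X)\le n+2$ is immediate from $\NAA(X)\le H_{\ast}\text{-}\dim(X)=n+2$; it also follows from Theorem~\ref{ithm1}(a) applied to the cofibration $S^{n+1}\xrightarrow{\eta_{n}}S^{n}\xrightarrow{i}X$, because $\gamma_{\#}\colon \pi_{n+1}(S^{n+1})\to \pi_{n+1}(S^{n})=\mathbb{Z}/2$ is the surjection sending $\Id$ to $\eta_{n}$ and $H_{\ast}\text{-}\dim(S^{n})=n<n+1$. The core task is to produce $f\in \AA^{n+1}(X)\setminus\Aut(X)$, forcing $\NAA(X)\ge n+2$. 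For this I extend the defining cofibration to the Puppe sequence
\[
S^{n+1}\xrightarrow{\eta_{n}} S^{n}\xrightarrow{i} X\xrightarrow{q} S^{n+2}\xrightarrow{\pm\Sigma\eta_{n}} S^{n+1}
\]
and apply $\pi_{n+2}$. The last map is (up to sign) composition with $\eta_{n+1}\ne 0$, hence surjective from $\pi_{n+2}(S^{n+2})=\mathbb{Z}$ onto $\pi_{n+2}(S^{n+1})=\mathbb{Z}/2$, so $q_{\ast}(\pi_{n+2}(X))=2\mathbb{Z}$. Pick $\alpha\in \pi_{n+2}(X)$ with $q\circ\alpha$ of degree $2$ and set $\beta:=\alpha\circ q\colon X\to X$. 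Since $n\ge 3$, the space $X=\Sigma(\Sigma^{n-3}\mathbb{CP}^{2})$ is a suspension, so $[X,X]$ is a group and we may define $f:=\Id_{X}+\beta$. Because $\beta$ factors through $S^{n+2}$ one has $\beta_{\ast}=0$ on $H_{k}(X)$ for $k<n+2$ and $\beta_{\ast}=2\cdot\Id$ on $H_{n+2}(X)$; therefore $f_{\ast}=\Id$ on $H_{i}(X)$ for $i\le n+1$ while $f_{\ast}$ is multiplication by $3$ on $H_{n+2}(X)$. Hence $f\in\AA^{n+1}(X)\setminus\Aut(X)$ and the lower bound $\NAA(X)\ge n+2$ follows.

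The main obstacle is correctly identifying the connecting map in the Puppe sequence as multiplication by the nonzero stable class $\eta_{n+1}\in \pi_{n+2}(S^{n+1})=\mathbb{Z}/2$: only then does $q_{\ast}$ on $\pi_{n+2}$ land in the proper subgroup $2\mathbb{Z}$, which is precisely what allows one to pick $\alpha$ whose top-cell degree is even and nonzero, and thus an $f=\Id_{X}+\beta$ whose degree on $H_{n+2}(X)$ is odd but different from $\pm 1$. A secondary point to verify is that the suspension structure on $X$ for $n\ge 3$ genuinely supports the addition $\Id_X+\beta$ used to modify the identity without disturbing the homology in dimensions $\le n+1$.
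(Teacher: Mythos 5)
Your argument is correct in outline and in fact ends up with the same homotopy class the paper uses: since Araki--Toda's relations give $\iota\circ\bar{\xi}+\widetilde{\xi}\circ q=2\Id$, your $f=\Id_X+\alpha\circ q$ coincides with the paper's $f=3\Id-\iota\circ\bar{\xi}$. The differences are in how the ingredients are obtained: for $n=2$ you use the cup-product structure of $\mathbb{C}P^2$ (perfectly fine, and more classical) where the paper invokes Corollary \ref{hves} via $\pi_3(S^2)\cong\mathbb{Z}\langle\eta_2\rangle$; for $n\geq 3$ you avoid the full Araki--Toda computation $[P,P]\cong\mathbb{Z}\langle\Id\rangle\oplus\mathbb{Z}\langle\iota\circ\bar{\xi}\rangle$ and only need one map $\alpha\colon S^{n+2}\to X$ with $\deg(q\circ\alpha)=2$, which is leaner. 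Your homology computation of $f_{\ast}$ (identity below the top cell, multiplication by $3$ on $H_{n+2}$) and the use of the co-H structure on $X=\Sigma^{n-2}\mathbb{C}P^2$ to form $\Id_X+\beta$ are correct, and they give $\NAA(X)\geq n+2$ directly, which is cleaner than the paper's ``$\neq n$'' phrasing.

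The one step that does not stand as written is the claim that applying $\pi_{n+2}$ to the Puppe sequence $X\xrightarrow{q}S^{n+2}\xrightarrow{\pm\Sigma\eta_n}S^{n+1}$ gives exactness, hence $q_{\ast}(\pi_{n+2}(X))=2\mathbb{Z}$. Homotopy groups applied to a cofiber sequence are not exact in general; what comes for free is only the inclusion $q_{\ast}(\pi_{n+2}(X))\subseteq 2\mathbb{Z}$ (consecutive maps compose trivially and $\eta_{n+1}\circ(d\cdot\Id)=d\,\eta_{n+1}$). The reverse containment, i.e.\ the existence of your $\alpha$ with $\deg(q\circ\alpha)=2$, is the whole content of the construction and needs a separate argument. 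It is true, and the standard repair is Blakers--Massey (or relative Hurewicz in the stable range): since $n+2\leq 2n$, the map $\pi_{n+2}(X,S^n)\to\pi_{n+2}(X/S^n)=\pi_{n+2}(S^{n+2})$ is an isomorphism, and in the exact sequence of the pair the boundary sends the generator to $\eta_n\in\pi_{n+1}(S^n)\cong\mathbb{Z}/2$, so the image of $\pi_{n+2}(X)$ in $\pi_{n+2}(S^{n+2})\cong\mathbb{Z}$ is exactly $2\mathbb{Z}$. Alternatively, simply quote the coextension $\widetilde{\xi}$ with $q\circ\widetilde{\xi}=2\cdot\Id_{S^{n+2}}$ from Araki--Toda, as the paper does. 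With that one justification added, your proof is complete.
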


\begin{proof}
For $n=2$, the Hopf map is $S^3\xrightarrow{\eta_2} S^2$  generates the group $\pi_3(S^2)\cong \mathbb{Z}$. Therefore, using Corollary \ref{hves}, we have $\NAA(S^2) = \NAA(S^2\cup_{\eta_2} e^4)$. Hence $\NAA(S^2\cup_{\eta_2} e^4) = 2$.

For $n\geq 3,$ the suspension of $\eta_2$ is $S^{n+1}\xrightarrow{\eta_n} S^n$. Using \cite[Corollary 4J.4]{AHAT}, we have $\pi_{n+1}(S^n) \cong \mathbb{Z}_2\langle \eta_n \rangle$. Therefore, we have $\NAA(S^n)\leq \NAA(S^n \cup_{\eta_n} e^{n+2})$ by Proposition \ref{prop1}(a). Hence, $$n\leq \NAA(S^n \cup_{\eta_n} e^{n+2})\leq n+2, ~\text{ for all } n\geq 3.$$
It is sufficient to show that $\NAA(S^n\cup_{\eta_n}e^{n+2})\neq n$. Let $P = S^n\cup_{\eta_n}e^{n+2}$. By \cite[Section 8]{ATMSMC}, we have $$[P,P]\cong \mathbb{Z}\langle \Id_P \rangle \oplus \mathbb{Z}\langle\iota\circ \bar{\xi}\rangle,$$ 
where $\Id_P\colon P\to P$ is the identity map, $\iota\colon S^n\to P$ is inclusion map, $q\colon P\to S^{n+2}$ is quotient map and $\bar{\xi}\in [P, S^n],~~ \widetilde{\xi}\in [S^{n+2},P]$ satisfying the following relations 

\begin{equation}\label{hopf}
\bar{\xi}\circ \iota = 2\cdot \Id_{S^n}, ~~ q\circ \widetilde{\xi} = 2\cdot \Id_{S^{n+2}}, ~~~ \iota\circ \bar{\xi} + \widetilde{\xi}\circ q = 2\cdot \Id_p. 
\end{equation}
Observe that the homomorphisms $\iota_{\ast}\colon H_n(S^n)\xrightarrow{\cong} H_n(P)$ and $q_{\ast}\colon H_{n+2}(P)\xrightarrow{\cong} H_{n+2}(S^{n+2})$ are isomorphisms.
Let $H_0(S^0)\cong \ZZ \langle a_0 \rangle.$ Consider the suspension map $H_0(S^0)\xrightarrow{\Sigma^k} H_k(S^k)$ defined as $a_0\mapsto a_k=\Sigma^k(a_0)$, which is an isomorphism for each $k\geq 1$. Observe that 

\[H_k(P) = \begin{cases}
	\mathbb{Z}\langle b \rangle &\text{if} ~k = n, \\
	\mathbb{Z}\langle c \rangle  &\text{if}~~k = n+2,\\
	0 &\text{ otherwise },
\end{cases} \] 
where $b = \iota_{\ast}(a_n)$, and  $c= (q_{\ast})^{-1}(a_{n+2})$. From the relation \ref{hopf}, we have
%$$\bar{\xi}_{\ast}(b) = \bar{\xi}_{\ast}\circ \iota_{\ast}(a_n) = (\bar{\xi}\circ \iota)_{\ast}(a_n) = (2\cdot \Id_{S^n})_{\ast}(a_n) = 2\cdot a_n.$$
$$(\iota\circ \bar{\xi})_{\ast}(b) = (\iota\circ \bar{\xi}\circ \iota)_{\ast}(a_n) = (2\cdot \iota\circ \Id_{S^n})_{\ast}(a_n) = 2\cdot \iota_{\ast}(a_n) = 2\cdot b.$$
Moreover, $\widetilde{\xi}_{\ast}(a_{n+2}) = (q_{\ast})^{-1}\circ q_{\ast}\circ \widetilde{\xi}_{\ast}(a_{n+2}) = 2\cdot (q_{\ast})^{-1}(a_{n+2}) = 2\cdot c.$

\noindent Therefore,
$$(\iota\circ \bar{\xi})_{\ast}(c) = (2\cdot \Id_P - \widetilde{\xi}\circ q)_{\ast}(c) = 2\cdot c - \widetilde{\xi}_{\ast}\circ q_{\ast}(c) = 2\cdot c - \widetilde{\xi}_{\ast}(a_{n+2}) = 0.$$

So for $f\in \AA^n(P)$, we have $f_{\ast}\colon H_k(P)\xrightarrow{\cong} H_k(P)$ for all $k\leq n$. Note that $$f = m\Id_P + r(\iota\circ \bar{\xi}), \text{ where } m,r\in \mathbb{Z}.$$ Thus $f_{\ast}(b) = mb + 2rb = (m+2r)b$. This implies that $m+2r = +1 \text{ or } -1$. Therefore, $f = 3\Id_P - (\iota\circ \bar{\xi})\in \AA^n(P)$. But $f\notin \Aut(P)$ as $f_{\ast}(c) = 3c.$ Hence $\NAA(P)\neq n.$

\end{proof}

\begin{remark}
Note that $\Sigma(C_{\eta_n}) = C_{\Sigma \eta_n} = C_{\eta_{n+1}}$ for all $n\geq 2$. Therefore, $\Sigma^{n-2} \mathbb{C}P^2 = S^n\cup_{\eta_n} e^{n+2}$ for all $n\geq 2$. Hence, 
\[\NAA(\Sigma^{n-2}\mathbb{C}P^2) = \begin{cases}
	2 &\text{if} ~n =2, \\
	n+2 &\text{if}~n\geq 3.
\end{cases} \] 
\end{remark}

\begin{proposition}\label{prop2}
$M(G,n)\xrightarrow{\gamma} X\xrightarrow{\iota} Y$ be a cofibration sequence such that $X$ is $2$-connected and $H_{\ast}\text{-}\dim(X)<n$. Assume that any one of the following conditions hold:
\begin{enumerate}[{(i)}]
\item For $q\geq 0, ~\pi_n(M(G,n);G)\cong \mathbb{Z}/{q\mathbb{Z}}\langle \Id_M\rangle$ and $\gamma$ is a generator of any $\mathbb{Z}$ direct summand of the abelian group $\pi_n(X;G)$.
\item For a prime $p>2$, if $G = \mathbb{Z}/{p\mathbb{Z}}$ and $\gamma$ is a generator of any $\mathbb{Z}$ direct summand of the abelian group $\pi_n(X;G)$.
%\item For $q>0, ~[M(G,n),M(G,n)]\cong \mathbb{Z}/{q\mathbb{Z}}\langle \Id_M\rangle$ and $\gamma$ is a generator of any $\mathbb{Z}/{m\mathbb{Z}}$ direct summand of the abelian group $\pi_n(X,G)=[M(G,n),X]$, where both $m,q$ have same prime divisors and $m|q$.
\end{enumerate} 
Then $\NAA(Y)\leq \NAA(X)$.
\end{proposition}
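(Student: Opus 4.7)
The plan is to deduce the bound from Theorem \ref{ithm2}. Note that the remaining hypotheses of that theorem about $X$ (\,$2$-connected\,), about $H_{\ast}\text{-}\dim(X)<n$, and about $G$ (torsion or free) are already present in the setup; in particular, one can invoke Theorem \ref{ithm2}(a) when $G$ has torsion and Theorem \ref{ithm2}(b) when $G$ is free. It therefore suffices to verify the two structural conditions appearing in Theorem \ref{ithm2}: (I) that $\gamma_{\#}\colon \pi_n(M(G,n);G)\to \pi_n(X;G)$ is a monomorphism, and (II) that $\gamma^{\#}(\Aut(X))\subseteq \gamma_{\#}(\pi_n(M(G,n);G))$.

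For (I), observe that $\gamma_{\#}$ is given by $h\mapsto \gamma\circ h$, so $\gamma_{\#}(s\Id_M)=s\gamma$. Under hypothesis (i), $\pi_n(M(G,n);G)$ is cyclic of order $q$ generated by $\Id_M$; since $\gamma$ is a generator of a cyclic direct summand of $\pi_n(X;G)$ and well-definedness forces $q\gamma=\gamma_{\#}(q\Id_M)=0$, the order of $\gamma$ is exactly $q$, whence $\ker\gamma_{\#}=0$. Under hypothesis (ii), a standard calculation using Hurewicz and the vanishing $\pi_{n+1}(M(\mathbb{Z}/p\mathbb{Z},n))=0$ for $p$ odd and $n\geq 3$ gives $\pi_n(M(\mathbb{Z}/p\mathbb{Z},n);\mathbb{Z}/p\mathbb{Z})\cong \mathbb{Z}/p\mathbb{Z}\langle\Id_M\rangle$, after which the same argument applies.

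For (II), I would use the direct summand decomposition $\pi_n(X;G)=\langle\gamma\rangle\oplus K$, so that $\gamma_{\#}(\pi_n(M(G,n);G))=\langle\gamma\rangle$. For $f\in \Aut(X)$, write $\gamma^{\#}(f)=f\circ\gamma=s\gamma+k$ with $s\in\mathbb{Z}$ and $k\in K$; the aim is to show $k=0$. This is the main obstacle, because an arbitrary automorphism $f_{\#}$ of $\pi_n(X;G)$ need not respect a chosen splitting. My plan is to construct a natural retraction $r\colon \pi_n(X;G)\to \langle\gamma\rangle$ vanishing on $K$ — for instance via a Hurewicz-type homomorphism into $H_n(X;G)$ or via a cohomology operation singling out $\gamma$ — and to check that $r$ is equivariant for the $\Aut(X)$-action, which forces $k=0$. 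An alternative route is to use the exactness of $[Y,X]\xrightarrow{\iota^{\ast}}[X,X]\xrightarrow{\gamma^{\ast}}\pi_n(X;G)$ coming from the cofibre sequence, reinterpreting the vanishing of $k$ as an extension condition on $f$ along $\iota$. Once (I) and (II) are established, Theorem \ref{ithm2} delivers $\NAA(Y)\leq \NAA(X)$.
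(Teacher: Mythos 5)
Your reduction to Theorem \ref{ithm2} breaks down at condition (II), and you have correctly identified the sore spot: under the hypotheses of Proposition \ref{prop2} the inclusion $\gamma^{\#}\big(\Aut(X)\big)\subseteq\gamma_{\#}\big(\pi_n(M(G,n);G)\big)$ is simply not available, and your proposed repairs do not produce it. Writing $\pi_n(X;G)\cong\mathbb{Z}\langle\gamma\rangle\oplus U$, an automorphism of $X$ has no reason to preserve this chosen splitting, so there is in general no $\Aut(X)$-equivariant retraction onto $\langle\gamma\rangle$ killing $U$: for instance with $X=S^2\vee S^2$ and $n=3$ (so $H_{\ast}\text{-}\dim(X)<n$ and $\pi_3(X)$ contains $\mathbb{Z}^2$ from the two factors), the self-equivalence swapping the wedge summands sends a generator $\gamma$ of one $\mathbb{Z}$-summand outside $\langle\gamma\rangle$, so $\gamma^{\#}(\Aut(X))\not\subseteq\langle\gamma\rangle=\gamma_{\#}(\pi_n(M;G))$ even though the hypotheses of the proposition hold. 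The Hurewicz- or cohomology-operation retraction you suggest would have to be natural in exactly this impossible sense, and the exactness of $[Y,X]\to[X,X]\xrightarrow{\gamma^{\#}}\pi_n(X;G)$ only detects when $\gamma^{\#}(f)$ is \emph{zero}, not when it lies in $\langle\gamma\rangle$; neither route closes the gap. (Your verification of the monomorphism condition (I) is fine, modulo the observation that $q\gamma=0$ together with $\gamma$ generating a $\mathbb{Z}$-summand forces $q=0$.)

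What is actually needed --- and what the paper does --- is not to verify the hypotheses of Theorem \ref{ithm2} but to rerun its \emph{proof} and replace the one step where condition (II) was used. Since $H_{\ast}\text{-}\dim(X)<n$ and the connectivity hypotheses hold, one again gets, for $f\in\AA^{l}(Y)$ with $l=\NAA(X)<n$, maps $g\in[X,X]$ and $h\in[M(G,n),M(G,n)]$ fitting into the diagram \ref{dia1}, with $g\in\AA^{l}(X)=\Aut(X)$; the only remaining task is to show $h\in\Aut(M(G,n))$. Because $\pi_n(M(G,n);G)$ is cyclic on $\Id_M$, one has $h=s\Id_M$, and writing $\bar g\circ\gamma=t\gamma+u$ with $u\in U$ (no assumption that $u=0$!), the single relation $\gamma=\bar g\circ g\circ\gamma=\bar g\circ\gamma\circ h=ts\gamma+su$ projected to the $\mathbb{Z}\langle\gamma\rangle$-summand gives $ts=1$, hence $s=\pm1$ and $h\in\Aut(M(G,n))$; the five lemma on the homology ladder of \ref{dia1} then gives $f\in\Aut(Y)$. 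Note the projection here is applied to one specific equation, so no equivariance is required --- that is precisely how the argument sidesteps the false inclusion you were trying to prove. Case (ii) reduces to this computation after identifying $\pi_n(M(\mathbb{Z}/p\mathbb{Z},n);\mathbb{Z}/p\mathbb{Z})\cong\mathbb{Z}/p\mathbb{Z}\langle\Id_M\rangle$ via the universal coefficient sequence, as you indicated.
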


\begin{proof}
Let $\NAA(X) = l<n$ and $f\in \AA^l(Y)$. As in the proof of Theorem \ref{ithm2}, we have $g\colon X\to X$ and $h\colon M(G,n)\to M(G,n)$, which satisfies the commutative diagram \ref{dia1}. Moreover, $g\in \AA^l(X) = \Aut(X)$. Let $\bar{g}$ be the homotopy inverse of $g$. It is sufficient to show that $h\in \Aut(M(G,n))$.

\begin{enumerate}[{(i)}] 
\item Let $\pi_n(X,G)\cong \mathbb{Z}\langle \gamma\rangle \oplus U$ for some subgroup $U$. Then we have $h = s\Id_M$ for some $s\in \mathbb{Z}$. Moreover $\bar{g}\circ \gamma = t\gamma + u$ for some $t\in \mathbb{Z}$ and some $u\in U$. It follows that $$\gamma = \bar{g}\circ g\circ \gamma = \bar{g}\circ \gamma\circ h = (t\gamma + u)\circ s\Id_M = ts\gamma + su.$$ This implies that $ts = 1$ and $su = 0$. So $s = +1\text{ or}-1$, hence $h\in \Aut(M(G,n)).$

\item From  universal coefficient theorem for homotopy, we have 
$$0\to \Ext \big(G,\pi_{n+1}(Z)\big)\to \pi_n(Z,G)\to \Hom\big(G,\pi_n(Z)\big)\to 0.$$
Take $Z = M(\mathbb{Z}/{p\mathbb{Z}},n)$. From \cite[Section 1]{AMSEHCH}, we have $$\pi_{n+1}(Z) = \pi_{n+1}(M(\mathbb{Z}/{p\mathbb{Z}},n)) = \mathbb{Z}/{p\mathbb{Z}}\otimes \mathbb{Z}/{2\mathbb{Z}} = 0.$$ 
Therefore, $\pi_n(M(\mathbb{Z}/{p\mathbb{Z}},n); \mathbb{Z}/{p\mathbb{Z}})\cong \Hom(\mathbb{Z}/{p\mathbb{Z}},\mathbb{Z}/{p\mathbb{Z}})\cong \mathbb{Z}/{p\mathbb{Z}}\langle \Id_M \rangle$. Hence, using part (i), we get the desired result. 

%\item Let $\pi_n(X,G)\cong \mathbb{Z}/{m\mathbb{Z}}\langle \gamma\rangle \oplus V$.
\end{enumerate}
\end{proof}

\begin{remark}
If $G$ is a free abelian group, then simply connectedness is sufficient for Proposition \ref{prop2}.
\end{remark}	

The following Corollary is a homological version of \cite[Theorem 5]{OYSCFC}.

\begin{corollary}\label{iscn}
Let $X$ be a simply connected CW-complex such that $\dim(X)\leq n-1$, where $n\geq 2$. If $\gamma\colon S^n\to X$ is a generator of a direct summand $\mathbb{Z}\subset\pi_n(X)$, then $$\NAA(X\cup_{\gamma} e^{n+1})\leq \NAA(X).$$
\end{corollary}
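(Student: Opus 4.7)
The plan is to derive this corollary as an immediate specialization of Proposition \ref{prop2}(i), using the remark following that proposition to relax the $2$-connectivity hypothesis to simple connectedness. The key observation is that taking $G=\mathbb{Z}$ in the general framework of Section \ref{sec2} produces exactly the setup of the corollary, because $M(\mathbb{Z},n)\simeq S^n$ and the homotopy cofiber of $\gamma\colon S^n\to X$ is $X\cup_\gamma e^{n+1}$.

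First I would translate the hypotheses of Proposition \ref{prop2}(i) to this setting. Homotopy with $\mathbb{Z}$ coefficients coincides with ordinary homotopy, so $\pi_n(M(\mathbb{Z},n);\mathbb{Z}) = [S^n,S^n] = \pi_n(S^n)\cong\mathbb{Z}\langle\Id_{S^n}\rangle$; this is cyclic with $q=0$ and generator $\Id_M=\Id_{S^n}$, matching clause (i) exactly. Likewise $\pi_n(X;\mathbb{Z}) = \pi_n(X)$, in which $\gamma$ generates a $\mathbb{Z}$ direct summand by assumption. The homological dimension hypothesis $H_\ast\text{-}\dim(X)<n$ follows from the bound $H_\ast\text{-}\dim(X)\leq\dim(X)\leq n-1$.

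Since $G=\mathbb{Z}$ is free abelian, the remark after Proposition \ref{prop2} tells us the $2$-connectivity assumption is not needed — simple connectedness of $X$, which is given, suffices. Applying Proposition \ref{prop2}(i) then yields $\NAA(X\cup_\gamma e^{n+1})\leq\NAA(X)$, as required.

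The only obstacle is verifying carefully that each hypothesis of Proposition \ref{prop2}(i) and its companion remark lines up with the data of the corollary; there is no genuine mathematical difficulty beyond this bookkeeping, since Proposition \ref{prop2}(i) already does all of the substantive work (producing the lift $g\colon X\to X$ via the cofibration, producing $h\colon S^n\to S^n$ as a self-map, exploiting the direct-summand condition to force $h\in\Aut(S^n)$, and then applying the five lemma).
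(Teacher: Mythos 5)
Your proposal is correct and follows essentially the same route as the paper: the paper's own proof is precisely the specialization of Proposition \ref{prop2}(i) to $G=\mathbb{Z}$, where $M(\mathbb{Z},n)=S^n$, $\pi_n(X;\mathbb{Z})=\pi_n(X)$, the cofiber is $X\cup_\gamma e^{n+1}$, and the remark after Proposition \ref{prop2} removes the $2$-connectivity requirement since $\mathbb{Z}$ is free. Your additional bookkeeping (checking $q=0$, $H_{\ast}\text{-}\dim(X)\leq\dim(X)\leq n-1<n$) only makes explicit what the paper leaves implicit.
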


\begin{proof}
Follows from Proposition \ref{prop2}(i), by taking $G = \mathbb{Z}$.

\end{proof}

\begin{example}
Finiteness theorem of Serre \cite[Section 6]{SGHCG} $$\pi_{4n-1}(S^{2n})\cong \mathbb{Z}\oplus F_n,  \text{ for all } n\geq 1,$$ where $F_n$ is a finite abelian group. Let $\gamma$ be a generator of the $\mathbb{Z}$ direct summand of the abelian group $\pi_{4n-1}(S^{2n})$. Therefore, $\NAA(S^{2n}\cup_{\gamma} e^{4n})\leq \NAA(S^{2n})$ by using Corollary \ref{iscn}. Hence, $$\NAA(S^{2n}\cup_{\gamma} e^{4n})\leq 2n.$$
Observe that $\AA^{2n-1}(S^{2n}\cup_{\gamma}e^{4n})(X) = [S^{2n}\cup_{\gamma}e^{4n}~ , ~S^{2n}\cup_{\gamma}e^{4n}]$. Let $f\colon S^{2n}\cup_{\gamma}e^{4n}\to S^{2n}\cup_{\gamma}e^{4n}$ be a constant map at base point. Therefore, $f\in [S^{2n}\cup_{\gamma}e^{4n}~,~S^{2n}\cup_{\gamma}e^{4n}]$ but $f\notin \Aut(S^{2n}\cup_{\gamma}e^{4n})$. Hence,
$$\NAA(S^{2n}\cup_{\gamma} e^{4n}) = \NAA(S^{2n}) = 2n.$$
\end{example}

%\end{section}

\sect{Homology decomposition}\label{sec3}
	
In this section, we consider homology decomposition of a space and its relation with
homology self-closeness numbers. First, we recall the definition.
\begin{definition}
Let $X$ be a simply connected CW-complex. A \emph{homology decomposition} of $X$ consists of a sequence of simply connected CW-complexes $\{X_n\}$ and structure maps 
$$j_n\colon X_n\to X, \iota_n\colon X_n\to X_{n+1},~k_{n+1}:M\big(H_{n+1}(X),n\big)\to X_n,$$ that satisfy the following conditions:
\begin{enumerate}[{(i)}]
\item $j_{n\ast}\colon H_k(X_n)\to H_k(X)$ is an isomorphism for all $k\leq n$ and $H_k(X_n) = 0$ for all $k>n$.
\item $M\big(H_{n+1}(X),n\big)\xrightarrow{k_{n+1}}X_n\xrightarrow{\iota_n}X_{n+1}$ is a cofibration sequence of the cellular map $k_{n+1}$ such that the induced map $k_{n+1\ast}\colon H_n(M(H_{n+1}(X),n))\to H_n(X_n)$ is trivial.
\item The following diagram commutes.
$$
\xymatrix{
&&X_{n+1} \ar[ddll]_{j_{n+1}} \\\\
X  &&   X_n  \ar[ll]^{j_n} \ar[uu]_{\iota_n} 
}
$$
\end{enumerate}	
\end{definition}

\noindent The collection  $\big\{X_n,j_n,\iota_n,k_n\big\}$ is called a homology decomposition of $X$. The spaces $X_n$ are called the $n$-th homology sections of $X$ and the maps $k_{n+1}\colon M\big(H_{n+1}(X),n\big) \to X_n$ are called homological  $k$-invariants. It is well known that every simply connected CW-complex admits a homology decomposition (see \cite{AIH,AHAT}). Note that if all homology groups are free then the cellular skeletons serve as a homology decompositions, with the cell attaching maps as $k$-invariants.

\begin{remark}\label{}
It follows from the definition that $\NAA(X_n)\leq n.$ 
\end{remark}

The following Theorem deduce relations between the homology self-closeness numbers of two consecutive homology section.

\begin{theorem}\label{rshsn}
Let $X$ be a simply connected $CW$-complex. For a homology decomposition $\big\{X_n,j_n,\iota_n,k_n\big\}$ of $X$, we have the following:
\begin{enumerate}[{(a)}]\setlength\itemsep{1 em}
\item If the induced homomorphism
$$k_{m+1\#}\colon \pi_m \big(M(H_{m+1}(X),m);H_{m+1}(X)\big)\to \pi_m\big(X_m; H_{m+1}(X)\big)$$
is an epimorphism for some $m\in \mathbb{N}$, then $\NAA(X_m)\leq \NAA(X_{m+1})$.
\item If the induced homomorphism 
$$k_{m+1\#}\colon \pi_m \big(M(H_{m+1}(X),m);H_{m+1}(X)\big)\to \pi_m\big(X_m; H_{m+1}(X)\big)$$
is a monomorphism such that $$k^{\#}_{m+1}(\Aut(X_m))\subseteq k_{m+1\#}\Big(\pi_m \big(M(H_{m+1}(X),m);H_{m+1}(X)\big)\Big),$$ where $H_k(X)$ is free-abelian group for $k=m,m+1$, then $$\NAA(X_{m+1})\leq \NAA(X_m).$$
\item If $H_k(X)$ is free-abelian group for $k=m,m+1$ and the induced homomorphism $k_{m+1\#}$ is an isomorphism, then $\NAA(X_m) = \NAA(X_{m+1})$.
\end{enumerate}
\end{theorem}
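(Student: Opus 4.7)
The plan is to observe that for each $m$, the homology decomposition delivers a cofibration sequence
$$M(H_{m+1}(X),m)\xrightarrow{k_{m+1}} X_m\xrightarrow{\iota_m} X_{m+1}$$
of exactly the form (\ref{eq2}) studied in Section \ref{sec2}, so the three parts should follow by applying Theorems \ref{ithm1}, \ref{ithm2}, \ref{ehsn} with $n=m$ and $G=H_{m+1}(X)$, provided one verifies the hypotheses. The first step I would take is to exploit the triviality $k_{m+1\ast}=0$ together with $H_k(X_m)=0$ for $k>m$ and $\tilde H_\ast(M(H_{m+1}(X),m))$ concentrated in degree $m$: the long exact sequence of the cofibration then forces $\iota_{m\ast}\colon H_k(X_m)\to H_k(X_{m+1})$ to be an \emph{isomorphism} for all $k\leq m$, together with $H_{m+1}(X_{m+1})\cong H_{m+1}(X)$. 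This extra strength in degree $m$ is what lets the Section \ref{sec2} arguments apply even though $H_{\ast}\text{-}\dim(X_m)$ can equal $m$ rather than being strictly smaller.

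For (a) I would follow the template of Theorem \ref{ithm1}. Set $\ell=\NAA(X_{m+1})$ and take $g\in\AA^\ell(X_m)$. The case $\ell\geq m$ is immediate since the vanishing $H_k(X_m)=0$ for $k>m$ forces $g_\ast$ to be an isomorphism on every $H_k(X_m)$, so $g\in\Aut(X_m)$ by Whitehead's theorem. For $\ell<m$, the surjectivity of $k_{m+1\#}$ produces $h$ making $g\circ k_{m+1}=k_{m+1}\circ h$, and the cofiber property supplies $f\colon X_{m+1}\to X_{m+1}$ with $f\circ\iota_m=\iota_m\circ g$. Transporting the isomorphism $g_\ast$ on $H_k$ for $k\leq\ell$ across $\iota_{m\ast}$ gives $f\in\AA^\ell(X_{m+1})=\Aut(X_{m+1})$, after which $\iota_{m\ast}$ iso for $k\leq m$ combined with $H_k(X_m)=0$ above $m$ upgrades $g$ to an automorphism of $X_m$.

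For (b) I would mirror Theorem \ref{ithm2}(b). Freeness of $H_{m+1}(X)$ yields $\dim M(H_{m+1}(X),m)=m$, supplying the dimensional input to invoke \cite[Proposition 4.4]{RMEFCC} and produce $h$. Freeness of $H_m(X)$, combined with $H_k(X_m)=0$ for $k>m$ and simple connectivity, allows \cite[Proposition 4C.1]{AHAT} to replace $X_m$ by a homotopy equivalent CW complex of dimension $\leq m$; since $\iota_m$ is a homotopical $m$-equivalence (Lemma \ref{leq} applied to the $m$-connected cofiber $M(H_{m+1}(X),m+1)$), obstruction theory then makes $\iota_{m\#}\colon [X_m,X_m]\to [X_m,X_{m+1}]$ surjective, producing $g$ for any $f\in\AA^\ell(X_{m+1})$. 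The remainder---deducing $g\in\Aut(X_m)$ from $\iota_{m\ast}$ being iso through degree $m$, invoking the inclusion hypothesis to pull $\bar g\circ k_{m+1}$ back to $\bar h$, and applying the Five Lemma---is verbatim from the proof of Theorem \ref{ithm2}. Part (c) is then immediate: an isomorphism is both an epimorphism and a monomorphism, and surjectivity of $k_{m+1\#}$ forces its image to coincide with all of $[M(H_{m+1}(X),m),X_m]$, so the inclusion hypothesis of (b) is automatic. The main bookkeeping hurdle throughout is the consistent use of the upgrade from injectivity to isomorphism of $\iota_{m\ast}$ in degree $m$, supplied by the triviality of $k_{m+1\ast}$ built into the definition of homology decomposition.
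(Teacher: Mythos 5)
Your proposal is correct and takes essentially the same route as the paper: the paper's own proof likewise re-runs the Section \ref{sec2} arguments (Theorems \ref{ithm1} and \ref{ithm2}) for the cofibration $M(H_{m+1}(X),m)\to X_m\to X_{m+1}$, using $k_{m+1\ast}=0$ to make $\iota_{m\ast}$ an isomorphism through degree $m$ together with $H_k(X_m)=0$ for $k>m$, rather than citing those theorems verbatim. The freeness hypotheses play exactly the roles you assign them (a CW model of dimension $\leq m$ for $X_m$ via Hatcher's Proposition 4C.1, and $\dim M(H_{m+1}(X),m)=m$ for Rutter's Proposition 4.4), and your part (c) matches the paper's deduction from (a) and (b).
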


\begin{proof}

\begin{enumerate}[{(a)}]
\item Let $\NAA(X_{m+1}) = l\leq m+1$ and $g\in \AA^l(X_m)$. If $l\geq m$, then $\AA^l(X_m) = \Aut(X_m)$ and we get the desired result.
	
Let us assume that $l<m$. Surjectivity of $k_{m+1\#}$ implies that there exists $h\colon M\big(H_{m+1}(X),m\big)\to M\big(H_{m+1}(X),m\big)$ such that $k_{m+1}\circ h = g\circ k_{m+1}$. Therefore, we get a homotopy commutative diagram 
    
\begin{align}\label{dia2}
  \xymatrix{
  M\big(H_{m+1}(X),m\big) \ar[dd]_{h}  \ar[rr]^{k_{m+1}}  &&  X_m \ar[dd]^{g} \ar[rr]^{\iota_m} && X_{m+1} \ar[dd]^{f} \\\\
  M\big(H_{m+1}(X),m\big) \ar[rr]^{k_{m+1}} && X_m \ar[rr]^{\iota_m} && X_{m+1}
           }
\end{align}
Observe that $\iota_{m\ast}\colon H_k(X_m)\xrightarrow{\cong} H_k(X_{m+1})$ for all $k\leq m$.  Therefore, $f\in \AA^l(X_{m+1}) = \Aut(X_{m+1})$. Using the commutativity of the right side diagram, we have $g\in \AA^m(X_m) = \Aut(X_m)$. Consequently, $$\NAA(X_m)\leq l = \NAA(X_{m+1}).$$
\item Observe that $H_{\ast} \text{- }\dim(X_m)\leq m$ and the map $\iota_m\colon X_m\to X_{m+1}$ is homotopical $m$-equivalence. So the induced map $\iota_{\#}\colon [X_m,X_m]\to [X_m,X_{m+1}]$ is surjective.
  
Let $\NAA(X_m) = r\leq m$ and $f\in \AA^r(X_{m+1})$. By surjectivity of $\iota_{\#}$ there exists $g\in [X_m,X_m]$ such that $\iota_m\circ g = f\circ \iota_m$. Therefore, using \cite[Proposition 4.4]{RMEFCC} there is a map $h\colon M\big(H_{m+1}(X),m\big)\to M\big(H_{m+1}(X),m\big)$ such that we have the homotopy commutative Diagram \ref{dia2}. Since $\iota_{m\ast}\colon H_k(X_m)\xrightarrow{\cong} H_k(X_{m+1})$ for all $k\leq m$, therefore $g\in \AA^r(X_m) = \Aut(X_m)$. Thus, there is a $\bar{g}\in \Aut(X_m)$ such that $g\circ \bar{g} = \Id_{X_{m}} = \bar{g}\circ g$. From the given assumption $$k^{\#}_{m+1}(\bar{g})\in k_{m+1\#}\big(\pi_m \big(M(H_{m+1}(X),m);H_{m+1}(X)\big)\big).$$ So there exists $\bar{h}\colon M\big(H_{m+1}(X),m\big)\to M\big(H_{m+1}(X),m\big)$ such that $k_{m+1\#}(h) = k^{\#}_{m+1}(\bar{g})$, i.e. $k_{m+1}\circ \bar{h} = \bar{g}\circ k_{m+1}$. Observe that $$k_{m+1}\circ \bar{h}\circ h = \bar{g}\circ k_{m+1}\circ h = \bar{g}\circ g\circ k_{m+1} = k_{m+1}.$$ Similarly, $k_{m+1}\circ h \circ \bar{h} = k_{m+1}.$ Therefore, injectivity of $k_{m+1\#}$ implies that $h\in \Aut\big(M\big(H_{m+1}(X),m\big)\big)$. Using  five Lemma on the long exact sequence of homology groups we have $f\in \Aut(X_{m+1})$. Consequently, $$\NAA(X_{m+1})\leq \NAA(X_m).$$
  
\item Note that $k^{\#}_{m+1}(\Aut(X_m))\subseteq \pi_m \big(X_m; H_{m+1}(X)\big)$.
Surjectivity of $k_{m+1\#}$ implies that $k_{m+1\#}\Big(\pi_m \big(M(H_{m+1}(X),m);H_{m+1}(X)\big)\Big) = \pi_m \big(X_m; H_{m+1}(X)\big)$.
Therefore using (a) and (b), we get the desired result.
\end{enumerate}	
\end{proof}

The following Proposition gives a relation between two consecutive homology sections which is independent of $k_{\#}$ and $k^{\#}$ maps.
\begin{proposition}\label{iihsn}
Let $X$ be a simply connected CW-complex. If $H_m(X)$ is finitely generated free abelian group for some $m\in \mathbb{N}$ then either $\NAA(X_{m+1}) = m+1$ or $\NAA(X_{m+1})\leq \NAA(X_m)$.
\end{proposition}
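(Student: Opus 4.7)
The plan is to argue by dichotomy on whether $\AA^r(X_{m+1})$ equals $\Aut(X_{m+1})$, where $r := \NAA(X_m)$. If it does, then by definition $\NAA(X_{m+1}) \leq r = \NAA(X_m)$, giving the second alternative. Otherwise, pick $f \in \AA^r(X_{m+1}) \setminus \Aut(X_{m+1})$; the strategy is to upgrade $f$ to an element of $\AA^m(X_{m+1}) \setminus \Aut(X_{m+1})$. Since $H_k(X_{m+1}) = 0$ for $k > m+1$, we have the automatic bound $\NAA(X_{m+1}) \leq m+1$, so such an upgrade forces $\NAA(X_{m+1}) = m+1$, which is the first alternative.

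For the upgrade, first observe that the cofiber of $\iota_m : X_m \to X_{m+1}$ is the Moore space $M(H_{m+1}(X), m+1)$, which is $m$-connected, so $\iota_m$ is a homotopical $m$-equivalence by Lemma \ref{leq}. Since $H_m(X_m) = H_m(X)$ is finitely generated free abelian and $H_k(X_m) = 0$ for $k > m$, the minimal-cell count in \cite[Proposition 4C.1]{AHAT} lets one replace $X_m$ up to homotopy equivalence by a CW-complex of dimension at most $m$ (no $(m+1)$-cells are needed because $H_m$ is torsion-free). Transporting along this homotopy equivalence, just as in the proof of Theorem \ref{ithm2}, yields surjectivity of $\iota_{m\#} : [X_m, X_m] \to [X_m, X_{m+1}]$. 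Lifting the class $f \circ \iota_m$ along $\iota_{m\#}$ produces $g : X_m \to X_m$ with $\iota_m \circ g \simeq f \circ \iota_m$.

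A short homology diagram chase on the resulting square finishes the argument. The identity $\iota_{m\ast} \circ g_\ast = f_\ast \circ \iota_{m\ast}$, together with $\iota_{m\ast}$ being an isomorphism in degrees $\leq m$ and $f_\ast$ being one in degrees $\leq r$, forces $g_\ast$ to be an isomorphism in degrees $\leq r$. Hence $g \in \AA^r(X_m) = \Aut(X_m)$, so $g_\ast$ is actually an isomorphism in every degree. Feeding this back into the same identity and using $\iota_{m\ast}$ iso for $k \leq m$ shows that $f_\ast : H_k(X_{m+1}) \to H_k(X_{m+1})$ is an isomorphism for every $k \leq m$, so $f \in \AA^m(X_{m+1})$ as required.

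The main technical obstacle is the surjectivity of $\iota_{m\#}$, which is precisely where the freeness of $H_m(X)$ is used, cutting the CW-dimension of $X_m$ down to $m$. Once the lift $g$ is in hand, the rest of the reasoning is purely diagrammatic and, unlike Theorem \ref{rshsn}(b), requires neither injectivity of $k_{m+1\#}$ nor the containment $k_{m+1}^{\#}(\Aut(X_m)) \subseteq k_{m+1\#}\bigl(\pi_m(M(H_{m+1}(X),m); H_{m+1}(X))\bigr)$; this is what allows the weaker hypothesis on $H_{m+1}(X)$ but yields only a dichotomy rather than a direct inequality.
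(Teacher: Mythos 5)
Your proposal is correct and follows essentially the same route as the paper: both use that $\iota_m$ is a homotopical $m$-equivalence together with the freeness of $H_m(X)$ (to replace $X_m$ by a complex of dimension $\leq m$) to get surjectivity of $\iota_{m\#}\colon [X_m,X_m]\to[X_m,X_{m+1}]$, lift $f$ to some $g$, and run the same homology diagram chase. Your dichotomy ($f\in\AA^r(X_{m+1})\setminus\Aut(X_{m+1})$ upgrades to $\AA^m$, forcing $\NAA(X_{m+1})=m+1$) is just the contrapositive packaging of the paper's assumption $\NAA(X_{m+1})<m+1$, so there is no substantive difference.
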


\begin{proof}
Observe that the map $\iota_m\colon X_m\to X_{m+1}$ is a homotopical $m$-equivalence. Then $\iota_{m\#}\colon [X_m, X_m]\to [X_m, X_{m+1}]$ is a surjective map. Assume that $\NAA(X_{m+1})< m+1.$ Let $\NAA(X_m) = l\leq m$ and $f\in \AA^l(X_{m+1})$. Then there exits $g\colon X_m \to X_m$ which make the following diagram homotopy commutative:
\begin{align}\label{dia3}
\xymatrix{
X_m \ar@{-->}[dd]_{\exists ~g} \ar[rr]^{\iota_m} && X_{m+1} \ar[dd]^{f} \\\\
X_m \ar[rr]^{\iota_m} && X_{m+1}
}
\end{align}
Thus, $g\in \AA^l(X_m) = \Aut(X_m)$. Therefore, $f\in \AA^m(X_{m+1}) = \Aut(X_{m+1})$. Hence, $$\NAA(X_{m+1})\leq l = \NAA(X_m).$$

%\item If each $H_k(X)$ is finitely generated free abelian group then $\NAA(X_n)\leq \NAA(X)$ for $n\geq 1$.
\end{proof}

\begin{example}\label{exam}
Consider $X = S^n\vee S^{n+1}$ for $n\geq 2$. Observe that $X_n = S^n$ and $X_{n+1} = X$. From Proposition \ref{iihsn}, we have $\NAA(X) = \NAA(X_{n+1}) = n+1$ or $\NAA(X) = \NAA(X_{n+1})\leq \NAA(X_n) = n$. Let $f\colon X\to X$ be a map defined as 
\[f(x) = \begin{cases}
	x &\text{if} ~x\in S^n, \\
	\ast &\text{if}~~x\in S^{n+1}.
\end{cases} \]
Then clearly $f\in \AA^n(X)$ but $f\notin \AA^{n+1}(X)$. Hence $\NAA(X) = \NAA(X_{n+1}) = n+1$.

Moreover consider $Y = \displaystyle\bigvee^{\infty}_{i =2} S^k$. For any $m\geq 2$ we have $Y_m = \displaystyle\bigvee^m_{i =2} S^k$. Now we proceed by induction. From the above we have $\NAA(Y_3) = 3$ and let $\NAA(Y_l) = l$. Then $Y_{l+1} = Y_l \bigvee S^{l+1}$ and consider a map $g\colon Y_{l+1}\to Y_{l+1}$ defined as 
\[g(y) = \begin{cases}
	y &\text{if} ~ y\in Y_l, \\
	\ast & \text{if}~y\in S^{l+1}.
\end{cases} \]
Thus, clearly $g\in \AA^l(Y_{l+1})$ but $g\notin \AA^{l+1}(Y_{l+1})$. Hence $\NAA(Y_{l+1}) = l+1$ using Proposition \ref{iihsn}. Therefore by induction we have $\NAA(Y_m) = m$ for $m\geq 2$ and $\NAA(Y)=\infty$.

\end{example}

Recall that the homotopical dimension of $X$ is defined by $$\pi_{\ast} \text{-} \dim(X) := \max\big\{k:~ \pi_k(X)\neq 0\big\}.$$
	
Given any $f\colon X\to Y$, the induced maps of homology sections $f_n\colon X_n\to Y_n$ do not necessarily exist. In \cite{AIMHD}, the author gives some sufficient conditions to have induced maps in homology decomposition.
	
\begin{lemma}\label{egshe}
Let $X$ be a simply connected CW-complex such that $H_k(X)$ is finitely generated free abelian group for each $k$. If $\pi_{\ast} \text{-} \dim(X) = m$, then for each $n\geq m+1$ $$\Aut(X)\cong \Aut(X_n).$$
\end{lemma}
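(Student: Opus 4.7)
The plan is to identify $X$ with the $m$-th Postnikov section of $X_n$ and use this identification, together with the finitely generated free homology hypothesis, to construct a bijection between $\Aut(X_n)$ and $\Aut(X)$.

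First, I would establish the Postnikov identification. By the definition of the homology decomposition, $j_n\colon X_n\to X$ is a homological $n$-equivalence, hence a homotopical $n$-equivalence by Lemma \ref{leq}. Combined with $n\geq m+1$ and $\pi_k(X)=0$ for $k>m$, this gives $j_{n\#}\colon \pi_k(X_n)\xrightarrow{\cong}\pi_k(X)$ for all $k\leq m$, so $X$ is weakly equivalent to the $m$-th Postnikov section of $X_n$ via $j_n$. Since each $H_k(X)$ is finitely generated free abelian, $H_k(X_n)$ is finitely generated free for $k\leq n$ and zero for $k>n$; by \cite[Proposition 4C.1]{AHAT}, $X_n$ then has the homotopy type of a CW complex of dimension at most $n$.

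Next, I would define the forward map $\Phi\colon \Aut(X_n)\to \Aut(X)$ via the universal property of the Postnikov section: each $f\in \Aut(X_n)$ descends to a unique $\bar f\in [X,X]$ with $\bar f\circ j_n\simeq j_n\circ f$, and Whitehead's theorem forces $\bar f\in \Aut(X)$; functoriality of the Postnikov construction makes $\Phi$ a group homomorphism. For surjectivity of $\Phi$, given $g\in \Aut(X)$ I would lift $g\circ j_n\colon X_n\to X$ through $j_n$ via obstruction theory. Replacing $j_n$ by a Serre fibration, the long exact sequence of homotopy groups shows the fiber $F$ is $(n-1)$-connected with $\pi_n(F)\cong \pi_n(X_n)$, and the obstruction groups $H^{k+1}(X_n;\pi_k(F))$ for $k\geq n$ all vanish because $X_n$ has CW dimension at most $n$. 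The resulting lift $\tilde g\colon X_n\to X_n$ satisfies $j_{n\ast}\circ \tilde g_\ast = g_\ast\circ j_{n\ast}$ on homology, forcing $\tilde g_\ast$ to be an isomorphism on $H_k(X_n)$ for $k\leq n$ and trivially so for $k>n$; thus $\tilde g\in \Aut(X_n)$ and $\Phi(\tilde g)=g$.

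The main obstacle is the injectivity of $\Phi$: a priori, two lifts of the same map through $j_n$ differ by a class in $[X_n,F]$, whose primary obstruction to triviality lies in $H^n(X_n;\pi_n(X_n))$ and generally does not vanish. To handle this I would invoke \cite{AIMHD}, whose sufficient conditions for induced maps on homology decompositions are met by the free finitely generated homology hypothesis; this provides a canonical inverse construction $\Aut(X)\to \Aut(X_n)$ to $\Phi$, forcing $\ker(\Phi)$ to be trivial and yielding the isomorphism $\Aut(X)\cong \Aut(X_n)$.
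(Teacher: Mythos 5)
Your construction of $\Phi$, the uniqueness of the descent $\bar f$, and the surjectivity argument (fibration replacement, $(n-1)$-connected fiber, vanishing of $H^{k+1}(X_n;\pi_k(F))$ because $X_n$ has the homotopy type of an $n$-dimensional complex) are all sound. The genuine gap is the injectivity of $\Phi$, and the proposed fix via \cite{AIMHD} does not close it. Injectivity of $\Phi$ is equivalent to injectivity of $j_{n\#}\colon [X_n,X_n]\to [X_n,X]$ on the relevant classes, i.e.\ to the statement that $j_n\circ f\simeq j_n$ forces $f\simeq \Id_{X_n}$. Arkowitz's induced-mapping theorem is an \emph{existence} statement: under a cohomological condition (which indeed holds here, since $H^{n+1}(X_n;-)=0$) every self-map $g$ of $X$ admits some $g_n\colon X_n\to X_n$ with $j_n\circ g_n\simeq g\circ j_n$. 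This yields a map $\Psi\colon \Aut(X)\to \Aut(X_n)$ satisfying $\Phi\circ \Psi=\Id$ by uniqueness of the descent, which only re-proves surjectivity of $\Phi$; nothing gives $\Psi\circ\Phi=\Id$, because there is no uniqueness clause for induced maps of homology sections --- uniqueness is exactly the difficulty you yourself identified, with primary difference obstruction in $H^{n}\big(X_n;\pi_n(F)\big)\cong H^{n}\big(X_n;\pi_n(X_n)\big)$, a group that need not vanish. So triviality of $\ker\Phi$ is not forced by anything in your argument.

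The paper avoids this issue by changing the model rather than arguing on the Postnikov side: since every $H_k(X)$ is finitely generated free, the homology decomposition can be realized as the skeletal filtration of a minimal CW structure, so $j_n$ is the inclusion of a subcomplex with $\dim X_n\leq n$ and all relative cells of $(X,X_n)$ in dimensions $\geq n+1$. Cellular approximation then defines a restriction map $\phi\colon [X,X]\to [X_n,X_n]$, $f\mapsto f'_n$, and obstruction theory with coefficients $\pi_k(X)$ (which vanish for $k\geq n>m$) shows $\phi$ is bijective: maps $X_n\to X_n\subset X$ extend over $X$ because $H^{k+1}\big(X,X_n;\pi_k(X)\big)=0$ for $k\geq n$, and homotopies extend because $H^{k+1}\big(X\times I, X\times\partial I\cup X_n\times I;\pi_k(X)\big)=0$ in the relevant range; multiplicativity of restriction then gives $\Aut(X)\cong \Aut(X_n)$. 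Note that both computations place the coefficients in degrees where $\pi_k(X)$ vanishes, which is precisely how the problematic class in $H^{n}\big(X_n;\pi_n(X_n)\big)$ never appears; your map $\Phi$ is the inverse of this $\phi$, and its injectivity is exactly the surjectivity of $\phi$. To repair your proof you need an independent argument that a self-map of $X_n$ is determined up to homotopy by its composite with $j_n$, and the most direct such argument is essentially the paper's skeletal one.
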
	
	
\begin{proof}
Since $H_k(X)$ is free abelian group for all $k$, therefore homology decomposition of $X$ gives the CW-decomposition of $X$. Therefore for any map $f\colon X\to X$, we have $f'_n\colon X_n \to X_n$ such that $f\circ j_n \simeq j_n\circ f'_n$ by cellular approximations, where $f'_n\colon X_n\to X_n$ is the restriction of the cellular map $f'\colon X\to X$ on $X_n$ such that $f\simeq f'$. Note that if $f\colon X\to X$ is self-homotopy equivalence then $f'_n\colon X_n\to X_n$ is self-homotopy equivalence. Consider the map 
\begin{align}\label{mehs}
\phi \colon [X,X]\to [X_n,X_n]~~ \text{ defined as } \phi(f) = f'_n.
\end{align}
Let $n\geq m+1$. To show the surjectivity of $\phi$, let $r\in [X_n,X_n]$ then $r\colon X_n\to X_n\subset X.$ Since $\pi_n(X) = 0$ for all $n\geq m+1$, so $H^{k+1}\big(X,X_n,\pi_k(X)\big) = 0$ for all $k\geq n$. Therefore by Obstruction we have a map $\widetilde{r}\colon X\to X$ such that $\phi(\widetilde{r}) = r$. For injectivity, let $f,g\in [X,X]$ such that $\phi(f) = \phi(g)$. This implies $f'_n \simeq g'_n\colon X_n\to X_n\subset X$. Let $F\colon X_n\times I\to X$ be the homotopy map between $f'_n$ and $g'_n$.   Consider a map $G\colon X\times \partial{I}\cup X_n\times I\to X$ defined as 

\[ G(x,t) = \begin{cases}
	f'(x)  & \forall (x,t)\in X\times\{0\}, \\
	g'(x) & \forall (x,t)\in X\times\{1\},  \\
	F(x,t) & \forall (x,t)\in X_n\times I.
\end{cases} \]

\noindent Note that $H^{k+1}\big(X\times I, X\times \partial{I}\cup X_n\times I; \pi_k(X)\big) = 0$ for all $k\geq n+1$. Therefore there exists an extension map $\widetilde{G}\colon X\times I\to X$ of $G$ by obstruction theory. Hence $\widetilde{G}$ be the homotopy between $f'$ and $g'$. Thus $f\simeq g$, so $\phi$ is injective.
Consequently for $n\geq m+1$ we get the bijective map  $$\phi\colon [X,X]\to [X_n,X_n].$$
Moreover Observe that $\phi(f\circ g)=\phi (f'\circ g') = (f'\circ g')_n = f'_n\circ g'_n = \phi(f)\circ \phi(g).$
Hence $$\phi\colon \Aut(X)\xrightarrow{\cong} \Aut(X_n).$$	
\end{proof}

\begin{remark}
For a CW-decomposition of X, a self-homotopy equivalence f: X → X does not necessarily induce a self-homotopy equivalence on the skeleton \cite[Remark]{RSPHEC}. 
Further, if the CW-decomposition of $X$ comes from the homology decomposition (when all the homology groups of X are free abelian), then the induced map of f on each skeleton gives a self-homotopy equivalence.
\end{remark}

Recall the theorem of Serre (\cite{SGHCG}): For a simply connected CW-complex $X$, the homology group $H_k(X)$ is finitely generated abelian group for all $k$ if and only if $\pi_k(X)$ is finitely generated abelian group for all $k$.

\begin{lemma}\label{ihhsnhs}
Let $X$ be a simply connected $CW$ complex such that $H_k(X)$ is finitely generated for each $k\in \mathbb{N}$. Then $\NA(X_n) = \NAA(X_n)\leq n$.	
\end{lemma}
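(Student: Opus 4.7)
The plan is to deduce both the inequality $\NAA(X_n)\leq n$ and the equality $\NA(X_n)=\NAA(X_n)$ from the homology decomposition together with Serre's finiteness theorem and Lemma~\ref{leq}. The inequality is immediate: by definition of the homology decomposition, $H_k(X_n)=0$ for every $k>n$, hence $H_{\ast}\text{-}\dim(X_n)\leq n$, and the general bound $\NAA(Y)\leq H_{\ast}\text{-}\dim(Y)$ recalled at the start of Section~\ref{sec2} yields $\NAA(X_n)\leq n$.

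To establish $\NA(X_n)=\NAA(X_n)$, I would prove the stronger equality $\A^k(X_n)=\AA^k(X_n)$ for every $k\geq 0$. Given $f\in \A^k(X_n)$, the hypothesis that $f_{\#}$ is an isomorphism on $\pi_i$ for $i\leq k$ makes $f$ a homotopical $k$-equivalence in the sense of the paper, so Lemma~\ref{leq} promotes it to a homological $k$-equivalence: $f_{\ast}\colon H_i(X_n)\to H_i(X_n)$ is an isomorphism for $i<k$ and an epimorphism for $i=k$. The reverse inclusion is symmetric, starting from $f\in \AA^k(X_n)$ and pulling back through Lemma~\ref{leq} to obtain that $f_{\#}$ is iso on $\pi_i$ for $i<k$ and epi on $\pi_k$.

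The remaining gap is precisely the upgrade from epimorphism to isomorphism at the top degree, and this is where the finite-generation hypothesis enters. Since $H_k(X_n)\cong H_k(X)$ for $k\leq n$ and vanishes for $k>n$, every $H_k(X_n)$ is finitely generated; by Serre's theorem quoted immediately above the lemma, every $\pi_k(X_n)$ is then finitely generated as well. A surjective endomorphism of a finitely generated abelian group is automatically an isomorphism, so the epimorphisms on $H_k(X_n)$ and $\pi_k(X_n)$ produced by Lemma~\ref{leq} are in fact isomorphisms. This gives $\A^k(X_n)=\AA^k(X_n)$ for all $k$, and consequently $\NA(X_n)=\NAA(X_n)$. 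I do not foresee a substantive obstacle: the whole argument is a clean assembly of Lemma~\ref{leq}, Serre's theorem, and the elementary Noetherian fact about endomorphisms of finitely generated abelian groups.
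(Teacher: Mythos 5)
Your proof is correct, but it takes a more self-contained route than the paper. The paper's own argument is essentially a one-liner: from condition (i) of the homology decomposition, $H_k(X_n)\cong H_k(X)$ for $k\leq n$ and $H_k(X_n)=0$ for $k>n$, so all $H_k(X_n)$ are finitely generated, and the equality $\NA(X_n)=\NAA(X_n)$ is then quoted from \cite[Corollary 42]{OYSCFC} (the bound $\leq n$ being the earlier Remark that $\NAA(X_n)\leq n$). What you do instead is re-prove the substance of that cited corollary: you establish the stronger degreewise identity $\A^k(X_n)=\AA^k(X_n)$ for all $k$ by combining Lemma \ref{leq} (which transfers ``iso below $k$, epi at $k$'' between homotopy and homology for maps of simply connected spaces), Serre's theorem (to get $\pi_k(X_n)$ finitely generated from the finitely generated $H_k(X_n)$), and the standard fact that a surjective endomorphism of a finitely generated abelian group is an isomorphism, which closes exactly the top-degree gap you correctly identified. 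Both ingredients are legitimately available in the paper (Lemma \ref{leq} and the quoted Serre theorem immediately precede the lemma), and each step checks out, including the simple connectivity of $X_n$ required by Lemma \ref{leq} and Serre. The trade-off: the paper's citation is shorter and defers the work to Oda--Yamaguchi, while your argument is transparent about where finite generation enters and yields the sharper monoid-level statement $\A^k(X_n)=\AA^k(X_n)$, from which the equality of self-closeness numbers is immediate.
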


\begin{proof}
Let $H_k(X)$ is finitely generated abelian group for each $k$. Therefore from the definition of homology decomposition we have $H_k(X_n)$ is finitely generated abelian group for each $k$. Hence we get the desired result using \cite[Corollary 42]{OYSCFC}.
	
\end{proof}

The following Theorem deduce the relation between homology self-closeness number of $X$ with its homology section. 
\begin{theorem}\label{echd}
Let $X$ be a simply connected CW-complex such that $\pi_{\ast} \text{-} \dim(X) = m$.	Then following holds:
\begin{enumerate}[{(a)}]\setlength\itemsep{0.5 em}
\item $2\leq \NAA(X)\leq m+1$.
\item $2\leq\NAA(X)\leq m$, if the group $\pi_m(X)$ is finitely generated.
\item $\NAA(X_n) = \NAA(X)$, if $n\geq m+1$ and $H_{k}(X)$ is finitely generated free abelian group for each $k$.
\item $\NAA(X_n)\leq \NAA(X)$, if $n\geq m+1$ and $H_k(X)$ is finite generated for each $k$.
\item $\NAA(X_n)\leq m$ for $n\leq m$.
\item $\NAA(X)\leq \NAA(X_n)$, if $\dim(X)\leq n < \infty$.
\end{enumerate}
\end{theorem}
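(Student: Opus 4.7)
The plan is to handle the six parts separately, grouping them by technique: parts (a) and (b) reduce to Lemma \ref{leq} together with the Hopfian property of finitely generated abelian groups; parts (c) and (d) exploit Lemma \ref{egshe} and an obstruction-theoretic extension, respectively; and parts (e) and (f) are essentially immediate from the definition of a homology decomposition. The lower bound $2\leq \NAA(X)$ in (a) and (b) comes from the general estimate $\conn(X)+1\leq \NAA(X)$ recalled in the introduction.

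For (a), I would take $f\in \AA^{m+1}(X)$; then $f$ is a homological $(m+1)$-equivalence, so by Lemma \ref{leq} a homotopical $(m+1)$-equivalence, giving $f_{\#}\colon \pi_k(X)\to \pi_k(X)$ an isomorphism for $k\leq m$. Combined with $\pi_k(X)=0$ for $k>m$, this makes $f$ a weak (hence homotopy) equivalence, proving $\AA^{m+1}(X)=\Aut(X)$. The same argument with $f\in \AA^m(X)$ proves (b): Lemma \ref{leq} yields $f_{\#}$ an isomorphism for $k<m$ and an epimorphism for $k=m$, and the Hopfian property of the finitely generated abelian group $\pi_m(X)$ promotes the epimorphism to an isomorphism.

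For (c), Lemma \ref{egshe} supplies a bijection $\phi\colon [X,X]\to [X_n,X_n]$ satisfying $j_n\circ \phi(g)\simeq g\circ j_n$, which restricts to an isomorphism $\Aut(X)\cong \Aut(X_n)$. Since $j_{n\ast}$ is an isomorphism on $H_k$ for $k\leq n$, the compatibility relation shows $g\in \AA^k(X) \Leftrightarrow \phi(g)\in \AA^k(X_n)$ for each $k\leq n$. As (a) gives $\NAA(X)\leq m+1\leq n$ and the remark after the definition gives $\NAA(X_n)\leq n$, both minima lie in this range, forcing $\NAA(X)=\NAA(X_n)$. For (d), set $l=\NAA(X)$ and take $f\in \AA^l(X_n)$; I would construct $\tilde f\colon X\to X$ extending $j_n\circ f\colon X_n\to X$. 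After replacing $j_n$ by its mapping cylinder, the long exact sequence shows $(X,X_n)$ is $n$-connected, so the relative CW structure can be chosen with cells only in dimensions $\geq n+1$, and the obstructions in $H^{k+1}(X,X_n;\pi_k(X))$ for $k\geq n$ all vanish because $n\geq m+1>m$. Then $j_{n\ast}$-conjugation transports $f\in \AA^l(X_n)$ to $\tilde f\in \AA^l(X)=\Aut(X)$; reversing the transport (and using $H_k(X_n)=0$ for $k>n$) shows $f$ is an isomorphism on all $H_{\ast}(X_n)$, hence a self-equivalence, giving $\NAA(X_n)\leq l=\NAA(X)$.

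For (e), $\NAA(X_n)\leq n\leq m$ is immediate from the remark after the definition of the homology decomposition. For (f), $\dim(X)\leq n$ forces $H_k(X)=0$ for $k>n$, so $j_{n\ast}$ is an isomorphism in every degree, and since it is a map between simply connected CW complexes, $j_n$ is a homotopy equivalence by Whitehead's theorem, yielding $\NAA(X_n)=\NAA(X)$. The main delicate step I anticipate is the extension argument in (d), where one must correctly identify the relative cell structure of $(X,X_n)$---the homology decomposition attaches cones on Moore spaces whose dimensions depend on torsion in $H_{\ast}(X)$---and carefully justify the $n$-connectedness reduction before applying obstruction theory.
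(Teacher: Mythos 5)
Your arguments for parts (a)--(e) follow essentially the same route as the paper's: (a) and (b) via Lemma \ref{leq} together with the Hopfian property of the finitely generated abelian group $\pi_m(X)$; (c) via the bijection $\phi$ of Lemma \ref{egshe} and the compatibility $j_n\circ\phi(g)\simeq g\circ j_n$, which identifies the chains $\AA^k$ up to level $n$; (d) via an obstruction-theoretic extension of $j_n\circ f$ to a self-map of $X$ followed by transporting homology isomorphisms back and forth --- the paper simply says ``as in the proof of Lemma \ref{egshe}'', and your mapping-cylinder, $n$-connectedness and relative CW-approximation justification of the vanishing of $H^{k+1}\big(X,X_n;\pi_k(X)\big)$ for $k\geq n\geq m+1$ is the careful version of that step; and (e) is the same one-line observation $\NAA(X_n)\leq n\leq m$. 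The only genuine divergence is (f): the paper uses Arkowitz's theorem on induced maps of homology decompositions, noting $H^{n+1}\big(X_n;H_{n+1}(X)\big)=0$ to obtain $f_n\colon X_n\to X_n$ with $j_n\circ f_n\simeq f\circ j_n$, and then deduces only the inequality $\NAA(X)\leq\NAA(X_n)$; you instead observe that for $n\geq\dim(X)$ the map $j_n$ induces isomorphisms on all homology groups between simply connected CW complexes, hence is a homotopy equivalence by the homology Whitehead theorem, which yields the stronger conclusion $\NAA(X_n)=\NAA(X)$ at once. Your route for (f) is both simpler and sharper (and is consistent with the paper's own later use of $X_n=X$ for $n\geq\dim X$ in Corollary \ref{ehshh}), while the paper's argument via induced maps is the one that would still apply in situations where $n<\dim X$ but the relevant obstruction group happens to vanish.
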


\begin{proof}
\begin{enumerate}[{(a)}]\setlength\itemsep{1 em}
\item Since  $X$ is a simply connected space, thus $\NAA(X)\geq 2$. Let $f\in \AA^{m+1}(X)$ then we have $f\in \A^m(X) = \Aut(X)$. Hence $2\leq \NAA(X)\leq m+1$. 
\item Let $f\in \AA^m(X)$. Then $f_{\ast}\colon H_k(X)\to H_k(X)$ is isomorphism for all $k\leq m$. Thus, $f_{\#}\colon \pi_k(X)\to \pi_k(X)$ is isomorphism for all $k\leq m-1$ and $f_{\#}\colon \pi_m(X)\to \pi_m(X)$ is surjective. Since $\pi_m(X)$ is finitely generated therefore $f_{\#}\colon \pi_m(X)\to \pi_m(X)$ is also an isomorphism. So, $f\in \A^m(X) = \Aut(X)$ and we get the desired result.	
\item Note that $\pi_k(X)$ is finitely generated abelian group for all $k$ as $H_{k}(X)$ is finitely generated abelian group for each $k$ by Serre. Assume that $n\geq m+1$. Let $\NAA(X) = l\leq m$ and $g\in \AA^l(X_n)$. Then $\phi^{-1}(g)\in \AA^l(X) = \Aut(X)$ from Lemma \ref{egshe}. Thus $g\in \Aut(X_n)$. So $\NAA(X_n)\leq l=\NAA(X)$.  If possible, assume that $\NAA(X_n)<l$. Let $h\in \AA^{l-1}(X)$ then $\phi(h)\in \AA^{l-1}(X_n) = \Aut(X_n)$. Therefore, $h\in \Aut(X)$, which contradict the fact that $\NAA(X) = l$. Therefore, for $n\geq m+1$, we have $$\NAA(X_n) = \NAA(X).$$
\item Let $\NAA(X) = r$ and $h\in \A^r(X_n)$. Since $n\geq m+1$, as in the proof of Lemma \ref{egshe}, we see that there exists a map $\bar{h}\colon X\to X$ such that the following diagram is homotopy commutative.
\begin{align}\label{dia9}
	\xymatrix{
		X \ar[rr]^{\bar{h}} && X  \\\\
		X_n \ar[uu]^{j_n} \ar[rr]_{h} && \ar[uu]_{j_n} X_n
	}
\end{align}
Note that the map $j_{n\ast}\colon H_i(X_n)\xrightarrow{\cong} H_i(X)$ for all $i\leq n$ and $r\leq m < n$. Thus, $\bar{h}\in \A^r(X) = \Aut(X).$ Therefore using the commutativity of the diagram we have $h\in \A^n(X_n) = \Aut(X_n)$. Hence $\NAA(X_n)\leq r = \NAA(X).$
\item Observe that $\NAA(X_n)\leq n$. Therefore we get the desired result.
\item Let $\dim(X) < \infty$ and $n\geq \dim(X)$. Thus $H^{n+1}\big(X_n, H_{n+1}(X)\big) = 0$. Therefore, for any map $f\colon X\to X$ there exists a map $f_n\colon X_n\to X_n$ such that we have a diagram as of Equation \ref{dia9}, which is homotopy commutative \cite[Theorem 3.1]{AIMHD}.
Let $\NAA(X_n) = k$ and $h\in \A^k(X)$. Using the commutativity of the diagram, we have $h_n\in \A^k(X_n) =\Aut(X_n)$. Further, using the diagram we get $h\in \A^n(X) = \Aut(X)$ since $\NAA(X) \leq \dim(X).$ Hence, $\NAA(X) \leq k = \NAA(X_n).$ 
\end{enumerate}
\end{proof}	
	
\begin{example}
Let $X = \mathbb{C}P^{\infty}.$ Then $\pi_{\ast} \text{-} \dim(X) = 2$. Note that 
\[ H_k(\mathbb{C}P^{\infty}) = 
\begin{cases}
	\mathbb{Z} &\text{if} ~k  \text{ is even,} \\
	 0 &\text{if}~k \text{ is odd}.
\end{cases} \] 
From Theorem \ref{echd}, we have $\NAA(X) = \NAA(X_n)$ for all $n\geq 3$, where $X_n$ is the $n$-th homology decomposition of $X$. Observe that $X_{2n+1} = X_{2n} = \mathbb{C}P^n$ for all $n\geq 1$. Therefore, $$\NAA(\mathbb{C}P^{\infty}) = \NAA(X) = \NAA(X_3) = \NAA(X_2) = \NAA(\mathbb{C}P^1) = \NAA(S^2) = 2.$$  
Moreover, $\NAA(\mathbb{C}P^n) = \NAA(X) = 2$ for all $n\geq 2$ by Theorem \ref{echd}.
Consequently,
$$\NAA(\mathbb{C}P^n) = \NAA(X_{2n}) = 2, ~~\text{for}~ n\in \mathbb{N}\cup \{\infty\}.$$
\end{example}	

\begin{remark}\label{rmk8}
Consider $X = \mathbb{C}P^{\infty}$. Then $X_{2n+1} = X_{2n} = \mathbb{C}P^n$ for $n\geq 1$. Therefore, 
\[ \NAA(X_n) = 
\begin{cases}
	0 &\text{if} ~~n<2, \\
	2 &\text{if}~~ n\geq 2.
\end{cases} \] 
\end{remark}

\begin{example}
Consider $X = K(G,n)$ for $n\geq 2$. Note that the Hurewicz map $h_{\ast}\colon \pi_{n+1}(X)\to H_{n+1}(X)$ is epimorphism, thus $H_{n+1}(X) = 0$. For homology sections $\{X_m\}$, we have
\[ X_m = 
\begin{cases}
	\ast &\text{if} ~~m<n, \\
	 M(G,n) &\text{if}~~ m = n, n+1.
\end{cases} \] 
By definition, the map $j_{n+2}\colon X_{n+2}\to X$ is a homologial $(n+2)$-equivalence, and hence a homotopical $(n+2)$-equivalence (cf. Lemma \ref{leq}). So $j_{n+2\#}\colon\pi_k(X_{n+2})\xrightarrow{\cong} \pi_k(X)$ is an isomorphism for all $k\leq n+1$. Hence, $X_{n+2}$ is $(n-1)$-connected CW-complex. Therefore,
$$\A^{n-1}(X_{n+2}) = \cdots = \A^1(X_{n+2}) = [X_{n+2}, X_{n+2}].$$ 
Thus, $\NA(X_{n+2})\geq n$. Using Lemma \ref{ihhsnhs}, we get $\NAA(X_{n+2}) = \NA(X_{n+2}) \geq n$. 

Moreover, from Theorem \ref{echd}(d), we have $\NAA(X_{n+2})\leq \NAA(X) = \NA(X) = n$. Hence, $\NAA(X_{n+2}) = n$. Similarly, we can prove that $\NAA(X_m) = n$ for $m > n+2$. Consequently,
\[ \NAA(X_m) = 
\begin{cases}
	0 &\text{if} ~~m<n, \\
	n &\text{if} ~~ m\geq n.
\end{cases} \] 
\end{example}

\sect{Homotopy decomposition}\label{sec4}

In this section, we consider homotopy decomposition (Postnikov tower) of a space $X$ which was studied in previous (cf. \cite{MBGSE}). This can be thought as a dual construction to homology decomposition. Here, we prove some results related to homotopy and homology decomposition and their intermediate relations. These results help us to compute homology self-closeness numbers of Postnikov tower, some of them can be thought as generalization of those results in \cite{CLSCWHD}. First, recall the definition

\begin{definition}
Let $X$ be a simply connected CW-complex. A homotopy decomposition (Postnikov decomposition) of $X$ consists of a sequence of simply connected CW-complexes $\{X^{(n)}\}$ and maps $g_n\colon X\to X^{(n)}$ such that 

\begin{enumerate}[{(i)}]\setlength\itemsep{1 em}
	\item Each inclusion map $g_n\colon X\to X^{(n)}$ induces isomorphisms  $g_{n\#}\colon \pi_k(X)\to \pi_k(X^{(n)})$ for all $k\leq n$ and $\pi_k(X^{(n)}) = 0$ for $k>n$.
	\item There exist maps $p_{n+1}\colon X^{(n+1)}\to X^{(n)}$ such that $$K\big(\pi_{n+1}(X),n+1\big)\to X^{(n+1)}\xrightarrow{p_{n+1}}X^{(n)}$$ is a fiber sequence.
	\item The following diagram is commutative
	$$
	\xymatrix{
		&&X^{(n+1)} \ar[dd]^{p_{n+1}} \\\\
		X \ar[uurr]^{g_{n+1}} \ar[rr]_{g_n} &&   X^{(n)}   
	}
	$$
\item The fibration sequence in (ii) is equivalent to the principal fibration 
$$K\big(\pi_{n+1}(X),n+1\big)\to X^{(n+1)}\to X^{(n)}\xrightarrow{k^{n+1}} K\big(\pi_{n+1}(X),n+2\big),$$
 determined by a map $k^{n+1}$, where $[k^{n+1}]\in H^{n+2}\big(X^{(n)};\pi_{n+1}(X)\big)$.
\end{enumerate}
\end{definition}
\noindent We write this as $\{X^{(n)},g_n,p_n,k^n\}$ and call it a homotopy decomposition (Postnikov decomposition) of $X$. The maps $k^{n+1}\colon X^{(n)}\to K\big(\pi_{n+1}(X),n+2\big)$ are called $k$-invariants. The space $X^{(n)}$  is called the $n$-th homotopy section and is obtained as the homotopy fiber of $k^{n}.$

\begin{remark}\label{rem10}
From the definition, we have $ \NA(X^{(n)})\leq n.$ 
\end{remark}

The following Lemma establish a relation between the homology self-closeness numbers of homotopy and homology sections.

\begin{lemma}\label{ehhsn}
Let $X$ be a simply connected CW-complex such that $H_k(X)$ is finitely generated abelian group for each $k$. Then the following results hold.
\begin{enumerate}[(i)]\setlength\itemsep{1 em}
	\item $\NAA(X^{(n)}) = \NA(X^{(n)})\leq n.$
	\item $\NAA(X^{(n)})\leq \NAA(X_n),$ if $H_n(X)$ is free-abelian group.
\end{enumerate}
\end{lemma}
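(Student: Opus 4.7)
For part (i), my plan is to combine Serre's theorem with results already in the paper. Since $H_k(X)$ is finitely generated for each $k$, Serre gives $\pi_k(X)$ finitely generated for each $k$; the homotopy groups of $X^{(n)}$ are either $\pi_k(X)$ (for $k \leq n$) or zero (for $k > n$), so all $\pi_k(X^{(n)})$ are finitely generated, and applying Serre in the reverse direction gives that $H_k(X^{(n)})$ is finitely generated for each $k$. Now \cite[Corollary 42]{OYSCFC} yields $\NAA(X^{(n)}) = \NA(X^{(n)})$, and Remark \ref{rem10} bounds the right-hand side by $n$.

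For part (ii), write $m := \NAA(X_n)$ and let $f \in \AA^m(X^{(n)})$. The strategy is to construct a homotopy-commutative square with vertical maps $\psi_n := g_n \circ j_n \colon X_n \to X^{(n)}$, top row a lift $\tilde f \colon X_n \to X_n$, and bottom row $f$; then deduce $\tilde f \in \Aut(X_n)$ and chase the square on $\pi_*$ to conclude that $f$ is a self-equivalence. First I would verify that $\psi_n$ induces isomorphisms on $H_k$ for $k \leq n$: for $j_n$ this is built into the homology decomposition, and for $g_n$ it follows from the Serre spectral sequence of the Postnikov fibration, whose fiber is $n$-connected. By Lemma \ref{leq}, $\psi_n$ is therefore a homotopical $n$-equivalence. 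The freeness of $H_n(X)$ ensures that $X_n$ has a CW-structure with $\dim X_n \leq n$, since $M(H_n(X), n-1)$ is a wedge of $(n-1)$-spheres whose cones contribute only $n$-cells, and the inductive construction of the homology decomposition shows $\dim X_{n-1} \leq n$.

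Next I would produce $\tilde f$ by obstruction theory. Letting $F$ denote the homotopy fiber of $\psi_n$, the long exact sequence of homotopy groups together with the $n$-equivalence property gives $\pi_k(F) = 0$ for $k \leq n-1$, so $F$ is $(n-1)$-connected. Hence the obstructions to lifting $f \circ \psi_n \colon X_n \to X^{(n)}$ through $\psi_n$ lie in $H^{k+1}(X_n; \pi_k(F))$, which vanish for $k \leq n-1$ because $\pi_k(F) = 0$ and for $k \geq n$ because $\dim X_n \leq n$. Commutativity of the resulting square together with $\psi_{n*}$ being iso on $H_k$ for $k \leq n$ forces $\tilde f_*$ to be iso on $H_k$ for $k \leq m$, so $\tilde f \in \AA^m(X_n) = \Aut(X_n)$. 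Chasing on homotopy, $\psi_{n\#} \tilde f_\# = f_\# \psi_{n\#}$ with $\tilde f_\#$ iso, $\psi_{n\#}$ iso for $k < n$, and $\psi_{n\#}$ epi for $k = n$, gives $f_\#$ iso on $\pi_k(X^{(n)})$ for $k < n$ and epi on $\pi_n(X^{(n)}) = \pi_n(X)$; since the latter is finitely generated abelian, epi implies iso, and $\pi_k(X^{(n)}) = 0$ for $k > n$ handles the remaining degrees. Whitehead's theorem then yields that $f$ is a self-equivalence. The main obstacle is the lifting step: without the freeness hypothesis, $X_n$ could carry $(n+1)$-cells and the obstruction in $H^{n+1}(X_n; \pi_n(F))$ could be nontrivial, which is precisely why this hypothesis is needed.
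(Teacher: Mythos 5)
Your proof is correct and follows essentially the same route as the paper: part (i) via Serre's theorem plus \cite[Corollary 42]{OYSCFC} and the bound $\NA(X^{(n)})\leq n$, and part (ii) via the composite $g_n\circ j_n\colon X_n\to X^{(n)}$, which is a homology isomorphism through degree $n$, together with a lift of $f$ across it and a diagram chase. Your obstruction-theoretic lifting (using $\dim X_n\leq n$ when $H_n(X)$ is free and the $(n-1)$-connectivity of the homotopy fiber) is just the detailed form of the paper's appeal to the compression lemma, and your Whitehead/Hopfian finish re-derives the paper's use of $\AA^n(X^{(n)})=\Aut(X^{(n)})$ from part (i), so the two arguments coincide in substance.
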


\begin{proof}
\begin{enumerate}[(i)]\setlength\itemsep{1 em}
\item From the definition of homotopy decomposition, we have $\pi_k(X^{(n)})$ is finitely generated for all $k$. Therefore using \cite[Corollary 42]{OYSCFC}, we get the desired result.
	
\item Let $j_n\colon X_n\to X$ and $g_n\colon X\to X^{(n)}$ be two maps over the homology decomposition and homotopy decomposition of $X$ respectively. Observe that $g_n\colon X\to X^{(n)}$ is homotopical $(n+1)$-equivalence.  From Lemma \ref{leq}, we say that $g_n$ is homological $(n+1)$-equivalence. Therefore, $(g_n\circ j_n)_{\ast}\colon H_i(X_n)\xrightarrow{\cong} H_i(X^{(n)})$ for all $i\leq n$. Moreover, for any $h\colon X^{(n)}\to X^{(n)}$ there exists a $\bar{h}\colon X_n\to X_n$ such that the following diagram is homotopy commutative by Compression Lemma.
\begin{align}
	\xymatrix{
			X^{(n)} \ar[rr]^{h} && X^{(n)}  \\\\
			X_n \ar[uu]^{g_n\circ j_n} \ar[rr]_{\bar{h}} && \ar[uu]_{g_n\circ j_n} X_n
		}
\end{align}
Let $\NAA(X_n) = k$ and $f\in \AA^k(X^{(n)})$. Thus, $\bar{f}\in \AA^k(X_n) = \Aut(X_n)$. Therefore, $f\in \AA^n(X^{(n)}) = \Aut(X^{(n)}).$ Hence, $$\NAA(X^{(n)})\leq \NAA(X_n).$$
\end{enumerate}
\end{proof}

Next we deduce relations between the homology self-closeness numbers of two consecutive homotopy sections in Postnikov tower. 

\begin{theorem}\label{hsnchs}
Let $X$ be a simply connected CW-complex such that $H_k(X)$ is finitely generated for all $k$. For homotopy decomposition $\{X^{(n)},g_n,p_n,k^n\}$ of $X$, we have the following:

\begin{enumerate}[{(a)}]\setlength\itemsep{1 em}
\item If the induced map $$k^{m+1\#}\colon \big[K\big(\pi_{m+1}(X), m+2\big), K\big(\pi_{m+1}(X), m+2\big)\big]\to \big[X^{(m)},K\big(\pi_{m+1}(X), m+2\big)\big]$$ is surjective  for some $m\in \mathbb{N}$, then $\NAA(X^{(m)})\leq \NAA(X^{(m+1)})$.

\item If the induced map $$k^{m+1\#}\colon \big[K\big(\pi_{m+1}(X), m+2\big), K\big(\pi_{m+1}(X), m+2\big)\big]\to \big[X^{(m)},K\big(\pi_{m+1}(X), m+2\big)\big]$$ is injective such that $$k^{m+1}_{\#}(\Aut(X^{(m)})) \subseteq k^{m+1\#}\big(\big[K\big(\pi_{m+1}(X), m+2\big), K\big(\pi_{m+1}(X), m+2\big)\big]\big),$$ then $\NAA(X^{(m+1)})\leq \NAA(X^{(m)})$.

\item If the induced map $k^{m+1\#}$ is bijective, then $\NAA(X^{(m)}) = \NAA(X^{(m+1)})$.
\end{enumerate}
\end{theorem}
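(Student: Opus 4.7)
The plan is to mirror the proof of Theorem \ref{rshsn}, dualized from a cofibration to a fibration. The relevant principal fibration is
$$K(\pi_{m+1}(X), m+1) \to X^{(m+1)} \xrightarrow{p_{m+1}} X^{(m)} \xrightarrow{k^{m+1}} K,$$
where $K := K(\pi_{m+1}(X), m+2)$. Two facts will be used throughout. First, $p_{m+1\#}\colon \pi_k(X^{(m+1)})\to \pi_k(X^{(m)})$ is an isomorphism for $k\le m$ and $\pi_k(X^{(m)})=0$ for $k>m$, so $p_{m+1}$ is a homotopical $(m+1)$-equivalence and, by Lemma \ref{leq}, a homological $(m+1)$-equivalence. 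Second, by Lemma \ref{ehhsn}(i), $\NAA(X^{(m)})\le m$ and $\NAA(X^{(m+1)})\le m+1$.

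For part (a), if $\NAA(X^{(m+1)})\ge m$ the inequality follows at once from the second fact. Otherwise set $l:=\NAA(X^{(m+1)})<m$ and take $g\in \AA^l(X^{(m)})$. Surjectivity of $k^{m+1\#}$ yields $h\colon K\to K$ with $h\circ k^{m+1}\simeq k^{m+1}\circ g$, and because $X^{(m+1)}$ is the homotopy fiber of $k^{m+1}$, this square extends to $f\colon X^{(m+1)}\to X^{(m+1)}$ satisfying $p_{m+1}\circ f\simeq g\circ p_{m+1}$. Passing to $H_k$ for $k\le l<m$ and using the homological equivalence of $p_{m+1}$ forces $f\in \AA^l(X^{(m+1)})=\Aut(X^{(m+1)})$. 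Applying $\pi_k$ to the left square, iso of $p_{m+1\#}$ for $k\le m$ gives $g_\#$ iso for $k\le m$, and vanishing of $\pi_k(X^{(m)})$ for $k>m$ yields $g\in \Aut(X^{(m)})$.

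For part (b), let $r:=\NAA(X^{(m)})\le m$ and $f\in \AA^r(X^{(m+1)})$. The universal property of the $m$-th Postnikov section (equivalently, vanishing of the obstructions $H^{k+1}(X^{(m+1)}; \pi_k(\mathrm{fiber}))$ for $k\ge m+1$) shows that $p_{m+1}^{\#}\colon [X^{(m)}, X^{(m)}]\to [X^{(m+1)}, X^{(m)}]$ is a bijection, producing a unique (up to homotopy) $g$ with $g\circ p_{m+1}\simeq p_{m+1}\circ f$. As in part (a), the homological equivalence of $p_{m+1}$ together with $r\le m$ yields $g\in \AA^r(X^{(m)})=\Aut(X^{(m)})$. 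The containment hypothesis then supplies $h\colon K\to K$ with $h\circ k^{m+1}\simeq k^{m+1}\circ g$; applying the same hypothesis to $g^{-1}$ produces $h'$ with $h'\circ k^{m+1}\simeq k^{m+1}\circ g^{-1}$, and the identities $(h\circ h')\circ k^{m+1}\simeq k^{m+1}\simeq \Id\circ k^{m+1}$, combined with injectivity of $k^{m+1\#}$, force $h\circ h'=\Id$ and similarly $h'\circ h=\Id$, so $h\in \Aut(K)$. Now $(f,g,h)$ is an endomorphism of the principal fibration; applying the five lemma to the induced ladder of long exact homotopy sequences, with $g_\#$ and $h_\#$ isomorphisms and using $\pi_k(X^{(m+1)})=0$ for $k>m+1$ as a downward base case, gives $f_\#$ iso on every $\pi_k$, hence $f\in \Aut(X^{(m+1)})$. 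Part (c) is then immediate from (a) and (b).

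The main obstacle is the construction of $g$ in part (b): unlike in part (a), we cannot build the diagram out of the $k$-invariant directly, so we must invoke the Postnikov universal property to lift $p_{m+1}\circ f$ along $p_{m+1}$. A secondary technical point is producing $h\in \Aut(K)$ from just the injectivity of $k^{m+1\#}$, which requires feeding both $g$ and $g^{-1}$ through the hypothesis and then using injectivity to recognize the resulting composites as the identity.
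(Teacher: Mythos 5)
Your route is genuinely different from the paper's: the paper disposes of this theorem by observing (Lemma \ref{ehhsn}(i)) that $\NAA(X^{(n)})=\NA(X^{(n)})$ for the Postnikov sections and then quoting \cite[Theorems 3.3 and 3.9]{OYSF}, whereas you re-prove the statements directly by dualizing the cofibration arguments of Theorems \ref{ithm2} and \ref{rshsn}. Your part (a) is essentially correct as a direct proof: the induced map $f$ on homotopy fibers of the $(g,h)$-square, the comparison of homology through the homological $(m+1)$-equivalence $p_{m+1}$, and the passage back to $g$ via $\pi_k$ and Whitehead all go through (with the small caveat that $\NAA(X^{(m)})\le m$ is exactly Lemma \ref{ehhsn}(i), so the finite generation hypothesis is used there).

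Part (b), however, has a genuine gap at the final step. Your $h$ is manufactured from $g$ (and $g^{-1}$) alone via the containment hypothesis; it has no compatibility whatsoever with the given $f$, so the triple $(f,g,h)$ is not known to be a morphism of the fibration sequence, and there is no "induced ladder" in which both $g_{\#}$ and $h_{\#}$ appear. The ladder you can legitimately form comes from rectifying $p_{m+1}\circ f\simeq g\circ p_{m+1}$, and its fiber-level column is the restriction of (the rectified) $f$ to $K\big(\pi_{m+1}(X),m+1\big)$; whether that restriction is an isomorphism on $\pi_{m+1}$ is precisely the remaining content, and nothing in your argument addresses it. Indeed, two maps covering the same $g$ differ by a class in $\big[X^{(m+1)},K\big(\pi_{m+1}(X),m+1\big)\big]$ whose restriction to the fiber can change the action on $\pi_{m+1}$ (for a product fibration the projection covers the identity without being an equivalence), so knowing that \emph{some} equivalence covers $g$ --- which is all that $h\in\Aut\big(K(\pi_{m+1}(X),m+2)\big)$ gives you --- does not show that the given $f$ is one. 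To close this you must tie the fiber restriction of $f$ to a map of $k$-invariants, i.e.\ use the fibration analogue of Rutter's result \cite[Proposition 4.4]{RMEFCC} that the paper invokes in Theorems \ref{ithm2} and \ref{rshsn}(b), or simply quote \cite[Theorem 3.9]{OYSF} after the $\NAA=\NA$ reduction, which is what the paper does; part (c) inherits this gap for the inequality $\NAA(X^{(m+1)})\le\NAA(X^{(m)})$. Two minor points: the obstruction groups for the surjectivity of $p_{m+1}^{\#}$ should have coefficients $\pi_k(X^{(m)})$ (the target), not the fiber --- the fact itself is \cite[Proposition 8.2.2]{AIH}, as used in Proposition \ref{ihsncs} --- and the notation $k^{m+1}_{\#}$ versus $k^{m+1\#}$ should be handled as in the statement (post- versus pre-composition).
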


\begin{proof}
Note that $\NAA(X^n) = \NA(X^n)$ for homotopy sections $\{X^n\}$ by Lemma \ref{ehhsn}. Rest of the proof directly follows from \cite[Theorem 3.3 \& Theorem 3.9]{OYSF}.
	
\end{proof}

%Combining Remark \ref{rem10} and Lemma \ref{ehhsn}, we have $\NAA(X^{(k)}) \leq k$.

\begin{proposition}\label{ihsncs}
Let $X$ be a simply connected CW-complex such that $H_k(X)$ is finitely generated for all $k$. Then either $\NAA(X^{(n+1)}) = n+1$ or $\NAA(X^{(n+1)})\leq \NAA(X^{(n)})$.
\end{proposition}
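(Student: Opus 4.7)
The plan is to mimic the proof of Proposition \ref{iihsn}, with the Postnikov fibration $p_{n+1}\colon X^{(n+1)}\to X^{(n)}$ playing the role of the cofibration $\iota_m\colon X_m\to X_{m+1}$. First I would observe that $p_{n+1}$ is a homotopical $(n+1)$-equivalence: from the Postnikov relation $g_n\simeq p_{n+1}\circ g_{n+1}$ and the fact that both $g_n,g_{n+1}$ induce isomorphisms on $\pi_k$ for $k\leq n$, $p_{n+1\#}$ is an isomorphism on $\pi_k$ for $k\leq n$, and on $\pi_{n+1}$ it is trivially surjective since $\pi_{n+1}(X^{(n)})=0$. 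By Lemma \ref{leq}, $p_{n+1}$ is therefore a homological $(n+1)$-equivalence, so in particular $p_{n+1\ast}\colon H_k(X^{(n+1)})\to H_k(X^{(n)})$ is an isomorphism for every $k\leq n$.

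Next, the key algebraic ingredient is the universal property of the Postnikov section: since $X^{(n)}$ is an $n$-type and $p_{n+1}$ is precisely the $n$-th Postnikov section of $X^{(n+1)}$, precomposition with $p_{n+1}$ gives a bijection $p_{n+1}^{\#}\colon [X^{(n)},X^{(n)}]\to [X^{(n+1)},X^{(n)}]$. Assume $\NAA(X^{(n+1)})\neq n+1$; since $\NAA(X^{(n+1)})\leq n+1$ by Lemma \ref{ehhsn}(i), this forces $\NAA(X^{(n+1)})\leq n$. Put $l=\NAA(X^{(n)})\leq n$ and take any $f\in\AA^l(X^{(n+1)})$. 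Applying the bijection to $p_{n+1}\circ f$ produces $g\colon X^{(n)}\to X^{(n)}$ with $g\circ p_{n+1}\simeq p_{n+1}\circ f$. Passing to $H_k$ and using that $p_{n+1\ast}$ is invertible for $k\leq n$, the relation $g_\ast\circ p_{n+1\ast}=p_{n+1\ast}\circ f_\ast$ forces $g_\ast$ to be an isomorphism for $k\leq l$; hence $g\in\AA^l(X^{(n)})=\Aut(X^{(n)})$. Reading the same relation the other way, $f_\ast$ is then an isomorphism on $H_k$ for every $k\leq n$, so $f\in\AA^n(X^{(n+1)})=\Aut(X^{(n+1)})$, where the last equality uses $\NAA(X^{(n+1)})\leq n$. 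Hence $\NAA(X^{(n+1)})\leq l=\NAA(X^{(n)})$.

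The main obstacle, and where the argument really departs from Proposition \ref{iihsn}, is the justification of the surjectivity of $p_{n+1}^{\#}\colon [X^{(n)},X^{(n)}]\to [X^{(n+1)},X^{(n)}]$. For homology sections, the analogous surjectivity of $\iota_{m\#}$ came for free from the dimension bound $H_{\ast}\text{-}\dim(X_m)\leq m$; here it is replaced by the universal property of Postnikov sections (equivalently, by the vanishing of the relevant obstructions with coefficients in $\pi_k(X^{(n)})=0$ for $k>n$), which is standard but is the conceptual crux of the argument.
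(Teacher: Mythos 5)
Your proof is correct and takes essentially the paper's route: the paper likewise rests on the surjectivity of $p_{n+1}^{\#}\colon [X^{(n)},X^{(n)}]\to [X^{(n+1)},X^{(n)}]$ (cited from Arkowitz, Proposition 8.2.2, which is the Postnikov universal property you invoke) and then repeats the argument of Proposition \ref{iihsn}. The only minor difference is that the paper runs that argument on homotopy groups, obtaining the dichotomy for $\NA$, and converts to $\NAA$ at the end via Lemma \ref{ehhsn}, whereas you argue directly in homology using Lemma \ref{leq} to see that $p_{n+1}$ is a homological $(n+1)$-equivalence; both versions use Lemma \ref{ehhsn}(i) for the bounds $\NAA(X^{(n)})\leq n$ and $\NAA(X^{(n+1)})\leq n+1$.
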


\begin{proof}
Observe that the map $p^{\#}_{n+1}\colon [X^{(n)}, X^{(n)}]\to [X^{(n+1)}, X^{(n)}]$ is surjective, see \cite[Proposition 8.2.2]{AIH}. As in the proof of Proposition \ref{iihsn}, we have either $\NA(X^{(n+1)}) = n+1$ or $\NA(X^{(n+1)})\leq \NA(X^{(n)})$. Moreover, using Lemma \ref{ehhsn}, we get the desired result.

\end{proof}

The following Theorem is a homological version of \cite[Theorem 3.5]{CLSCWHD}.
\begin{theorem}\label{hshd}
Let $X$ be a simply connected CW-complex  of dimension $m$ such that $H_k(X)$ is finitely generated for each $k$. Then
\begin{enumerate}[{(a)}]\setlength\itemsep{1 em}
	\item $\NAA(X^{(n)}) = \NAA(X)$, if the conditions hold $\NAA(X)\leq n$ and $m-1\leq n$.
	\item $\NAA(X)\leq \NAA(X^{(n)})$, if the condition holds $\NAA(X)\leq n < m - 1$.
	\item $\NAA(X^{(n)})< \NAA(X)$, if the condition holds $n<\NAA(X)$.
\end{enumerate}
\end{theorem}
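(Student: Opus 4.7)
The strategy is to compare $X$ with its Postnikov section $X^{(n)}$ via two complementary obstruction-theoretic facts about the approximation map $g_n\colon X\to X^{(n)}$. Since $\pi_k(X^{(n)})=0$ for $k>n$, restriction along $g_n$ gives a bijection $g_n^{\#}\colon [X^{(n)},X^{(n)}]\xrightarrow{\cong}[X,X^{(n)}]$, so every $\phi\colon X\to X^{(n)}$ extends uniquely up to homotopy to $\widetilde\phi\colon X^{(n)}\to X^{(n)}$ with $\widetilde\phi\circ g_n\simeq\phi$. Under the hypothesis $m-1\le n$, that is $\dim X\le n+1$, every such $\phi$ can in addition be lifted through $g_n$ to a map $X\to X$: the obstructions live in $H^{i+1}(X;\pi_i(F))$ for $i\ge n+1$, where $F$ is the homotopy fiber of $g_n$, and all such groups vanish because $i+1\ge n+2>m$. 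Lemma \ref{leq} gives that $g_{n\ast}\colon H_i(X)\to H_i(X^{(n)})$ is an isomorphism for $i\le n$ and an epimorphism for $i=n+1$, while Lemma \ref{ehhsn}(i) yields $\NAA(X^{(n)})\le n$.

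Part (c) is immediate: $\NAA(X^{(n)})\le n<\NAA(X)$. For part (b), set $l=\NAA(X^{(n)})\le n$ and take $\bar h\in\AA^l(X)$. Extend $g_n\circ\bar h$ to $h\colon X^{(n)}\to X^{(n)}$ with $h\circ g_n\simeq g_n\circ\bar h$; then the relation $h_\ast g_{n\ast}=g_{n\ast}\bar h_\ast$, combined with $g_{n\ast}$ being an isomorphism for $i\le l$, forces $h\in\AA^l(X^{(n)})=\Aut(X^{(n)})$. Now $\bar h_\ast=g_{n\ast}^{-1}h_\ast g_{n\ast}$ is an isomorphism on $H_i(X)$ for all $i\le n$, so $\bar h\in\AA^n(X)=\Aut(X)$ by the hypothesis $\NAA(X)\le n$, giving $\NAA(X)\le l$.

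For the inclusion $\NAA(X^{(n)})\le\NAA(X)$ in part (a), set $k=\NAA(X)\le n$ and take $h\in\AA^k(X^{(n)})$; since $m\le n+1$, lift $h\circ g_n$ to $\bar h\colon X\to X$ with $g_n\circ\bar h\simeq h\circ g_n$. The same diagram chase yields $\bar h\in\AA^k(X)=\Aut(X)$ and then $h\in\AA^n(X^{(n)})=\Aut(X^{(n)})$, proving $\NAA(X^{(n)})\le k$. The reverse inclusion $\NAA(X)\le\NAA(X^{(n)})$ runs exactly as in (b) to produce $h\in\Aut(X^{(n)})$ with $h\circ g_n\simeq g_n\circ\bar h$ and $\bar h_\ast$ an isomorphism on $H_i(X)$ for $i\le n$. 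When $n\ge m$ this already ensures $\bar h\in\Aut(X)$, since $H_i(X)=0$ for $i>m$.

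The main obstacle is the borderline case $n=m-1$ of (a): here $g_{n\ast}$ is only surjective on $H_m(X)$ and lifts through $g_n$ are no longer homotopically unique, so one cannot directly control $\bar h_\ast$ on $H_m(X)$. To circumvent this, lift $h^{-1}$ as well to obtain $\bar h'\colon X\to X$ with $g_n\circ\bar h'\simeq h^{-1}\circ g_n$. Then $g_n(\bar h\bar h')\simeq h\circ g_n\circ\bar h'\simeq h\circ h^{-1}\circ g_n=g_n$, and symmetrically $g_n(\bar h'\bar h)\simeq g_n$, so after applying $g_{n\ast}$ (an isomorphism for $i\le m-1$) each composite acts as the identity on $H_i(X)$ for $i\le m-1$. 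Hence $\bar h\bar h',\bar h'\bar h\in\AA^{m-1}(X)=\Aut(X)$ by the hypothesis $\NAA(X)\le n=m-1$, and a standard monoid argument (an element whose products on both sides land in the unit group is itself a unit) forces $\bar h\in\Aut(X)$. Together with the earlier inclusion this completes (a).
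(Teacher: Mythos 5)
Your proof is correct and follows essentially the same route as the paper: the inequality $\NAA(X^{(n)})\leq\NAA(X)$ comes from lifting self-maps of $X^{(n)}$ through $g_n$ (obstructions vanish since $\dim X\leq n+1$, which is the paper's compression-lemma step), the inequality $\NAA(X)\leq\NAA(X^{(n)})$ comes from extending $g_n\circ\bar h$ over $X^{(n)}$ (the paper's appeal to induced maps of Postnikov systems) together with $\AA^n(X)=\Aut(X)$, and (c) is the bound $\NAA(X^{(n)})\leq n$. Note only that your separate treatment of the borderline case $n=m-1$ is superfluous: there as well $\bar h_\ast$ is an isomorphism on $H_i(X)$ for $i\leq n$, so $\bar h\in\AA^{n}(X)=\Aut(X)$ directly from the hypothesis $\NAA(X)\leq n$, exactly as in your part (b).
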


\begin{proof}
\begin{enumerate}[(a)]\setlength\itemsep{1 em}
\item We want to prove the inequality first $\NAA(X^{(n)})\leq \NAA(X)$. Assume that $\NAA(X) = k$. Consider the case $k < n$, otherwise we get the inequality trivially. Given any map $\phi\colon X^{(n)}\to X^{(n)}$ there exists a map $\bar{\phi}\colon X\to X$ such that the following diagram is homotopy commutative by Compression Lemma \cite[Lemma 4.6]{AHAT}.
\begin{align}\label{dia8}
	\xymatrix{
		X^{(n)} \ar[rr]^{\phi} && X^{(n)}  \\\\
		X \ar[uu]^{g_n} \ar[rr]_{\bar{\phi}} && \ar[uu]_{g_n} X
		       }
\end{align}
Let $f\in \AA^k(X^{(n)})$. Then there exists a $\bar{f}\in \AA^k(X)$ by Compression Lemma and the fact that $g_{n\ast}\colon H_i(X)\xrightarrow{\cong} H_i(X^{(n)})$ for all $i\leq n$. 
Since $\bar{f}\in \AA^k(X) = \Aut(X)$, therefore using the commutativity of the diagram, we have $f\in \AA^n(X^{(n)}) = \Aut(X^{(n)}).$ Hence $\NAA(X^{(n)})\leq \NAA(X).$ 
	
Conversely, let $\NAA(X^{(n)}) = l$ and $h\in \AA^l(X)$. Then, there exists $h^n\colon X^{(n)}\to X^{(n)}$ which makes the above Diagram \ref{dia8} homotopy commutative (cf. \cite{KIPS}). Thus, $h^n\in \AA^l(X^{(n)}) = \Aut(X^{(n)}).$ Therefore, $h\in \AA^n(X) = \Aut(X)$. Hence $\NAA(X)\leq \NAA(X^{(n)})$ and we get the desired result.
\item Follows from the converse part of (a).
\item Observe that $\NAA(X^{(n)})\leq n$. Thus, we get the desired result.
\end{enumerate}
\end{proof}

\begin{corollary}\label{ehshh}
Let $X$ be a simply connected CW-complex such that $H_k(X)$ is finitely generated for all $k$. Assume that any one of the following conditions hold:
\begin{enumerate}[{(a)}]\setlength\itemsep{1 em}
	\item $\pi_{\ast} \text{-} \dim(X) <n < \infty$ and $H_k(X)$ is free for each $k$.
	\item $\dim(X) \leq n < \infty$.
\end{enumerate}	
Then, $\NAA(X^{(n)}) = \NAA(X_n)$.
\end{corollary}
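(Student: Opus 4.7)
The plan is to reduce both sides of each equality to $\NAA(X)$, using the fact that once $n$ is large enough, $X$ agrees up to homotopy equivalence with both its $n$-th homotopy section and its $n$-th homology section, with slightly different ``large enough'' thresholds in the two cases.

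Under condition (a), set $m = \pi_{\ast}\text{-}\dim(X) < n$. First I would observe that the canonical map $g_n\colon X \to X^{(n)}$ is a weak equivalence: by definition of the Postnikov decomposition $g_{n\#}$ is an isomorphism on $\pi_k$ for $k \le n$ and $\pi_k(X^{(n)}) = 0$ for $k > n$; since $\pi_k(X) = 0$ for $k > m$ and $m < n$, the map $g_{n\#}$ is in fact an isomorphism in every degree. Because $X$ and $X^{(n)}$ are simply connected CW-complexes, Whitehead's theorem promotes $g_n$ to a homotopy equivalence, so $\NAA(X^{(n)}) = \NAA(X)$. On the other side, since $H_k(X)$ is finitely generated free abelian and $n \ge m+1$, Theorem \ref{echd}(c) gives $\NAA(X_n) = \NAA(X)$. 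The equality $\NAA(X^{(n)}) = \NAA(X_n)$ follows.

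Under condition (b), set $m = \dim(X) \le n$. First I would argue that $j_n\colon X_n \to X$ is a homotopy equivalence: by construction $j_{n\ast}$ is an iso on $H_k$ for $k \le n$ and $H_k(X_n) = 0$ for $k > n$, and since $\dim(X) \le n$ we also have $H_k(X) = 0$ for $k > n$, so $j_n$ is a homology equivalence between simply connected CW-complexes, and hence a homotopy equivalence by Lemma \ref{leq}. This yields $\NAA(X_n) = \NAA(X)$. For the homotopy side, I would invoke Theorem \ref{hshd}(a): with $m = \dim(X)$ we have $m - 1 \le n$, and $\NAA(X) \le H_{\ast}\text{-}\dim(X) \le m \le n$, so both hypotheses of that theorem hold, giving $\NAA(X^{(n)}) = \NAA(X)$. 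Combining, $\NAA(X^{(n)}) = \NAA(X_n)$.

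Neither case requires any new calculation; the only point that needs care is keeping the two dimension notions straight — condition (a) uses the homotopical dimension to collapse $g_n$ to an equivalence, while condition (b) uses the geometric dimension to collapse $j_n$ to an equivalence — so that the appropriate one of Theorem \ref{echd}(c) or Theorem \ref{hshd}(a) can furnish the remaining equality.
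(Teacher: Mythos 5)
Your proposal is correct and follows essentially the same route as the paper: in case (a) you collapse $g_n\colon X\to X^{(n)}$ to a homotopy equivalence (since $n>\pi_{\ast}\text{-}\dim(X)$) and apply Theorem \ref{echd}(c), while in case (b) you identify $X_n\simeq X$ via $j_n$ and apply Theorem \ref{hshd}(a), which is exactly the paper's argument. Your version merely spells out the hypothesis checks (Whitehead/Lemma \ref{leq} and $\NAA(X)\leq H_{\ast}\text{-}\dim(X)\leq\dim(X)\leq n$) in more detail.
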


\begin{proof}
\begin{enumerate}[{(a)}]\setlength\itemsep{1 em}
 \item Observe that the structure map of the Postnikov tower $g_n\colon X\to X^{(n)}$ is a homotopy equivalence if $n \geq \pi_{\ast} \text{-} \dim(X)$. Thus for $n\geq \pi_{\ast} \text{-} \dim(X)$, we have $\NAA(X) = \NAA(X^{(n)})$. Therefore from Theorem \ref{echd} we have $\NAA(X^{(n)}) = \NAA(X_n)$ for $n\geq \pi_{\ast} \text{-} \dim(X) +1$.
 \item Note that for $n\geq \dim(X)$, we have $X_n = X$. Hence using Theorem \ref{hshd}, we get the desired result.
\end{enumerate}
	
\end{proof}

%The following Lemma gives an explicit relation between homology self-closeness numbers of homotopy and homology sections. 
%\begin{lemma}\label{ihhsn}
%Let $X$ be a simply connected CW-complex such that $H_k(X)$ is finitely generated for all $k$. If $H_m(X)$ is free for some $m\in \mathbb{N},$ then $$\NAA(X^{(m)})\leq \NAA(X_m).$$
%\end{lemma}

%\begin{proof}
%As in the proof of Lemma \ref{ehhsn} we have that $g_m\circ j_m\colon X_m\to X^{(m)}$ is a homotopical $m$-equivalence. Thus the map  $(g_m\circ j_m)_{\#}\colon [X_m,X_m]\to [X_m,X^{(m)}]$ is surjective. Let $\NAA(X_m) = l\leq m$ and $f\in \AA^l(X^{(m)})$. Then there exist $h\colon X_m\to X_m$ making the following diagram homotopy commutative.
%
%\begin{align}\label{dia4}
%\xymatrix{
%X_m \ar@{-->}[dd]_{\exists ~ h} \ar[rr]^{g_m\circ j_m} && X^{(m)} \ar[dd]^{f} \\\\
%X_m \ar[rr]_{g_m\circ j_m} && X^{(m)}
%}
%\end{align}
%
%Therefore $h\in \AA^l(X_m) = \Aut(X_m)$. So $f\in \AA^m(X^{(m)}) = \Aut(X^{(m)})$, since $\NAA(X^{(m)}) = \NA(X^{(m)})\leq m$. Hence $\NAA(X^{(m)}) \leq l = \NAA(X_m)$.

%\end{proof}

\begin{example}
For $n\in \mathbb{N}\cup \{\infty\}$, consider $X = \mathbb{C}P^n$. From Lemma \ref{ehhsn}(ii), we have $$\NAA(X^{(k)})\leq \NAA(X_k) ~\text{ for all } k.$$  For $k\geq 2$, we know that $X^{(k)}$ are simply connected, thus $\NAA(X^{(k)})\geq 2$. Therefore,  $$2\leq \NAA(X^{(k)})\leq \NAA(X_k) = 2 ~\text{ for } k\geq 2.$$ Hence,
\[ \NAA(X^{(k)}) = 
\begin{cases}
	0 &\text{if}~ k<2, \\
	2 &\text{if}~ k\geq 2.
\end{cases} \]
\end{example}

\begin{example}
Let $G$ and $H$ be finitely generated abelian groups. Consider $X = M(G,n)\bigvee M(H,m)$, where $m>n>1$. Observe that
\[ X^{(k)} = 
\begin{cases}
\ast  &\text{if}~ k<n, \\
K(G,n) &\text{if}~ k = n.
\end{cases} \]
Moreover, $H_k(X) = 0$ for all $n<k<m$. Therefore,
\[ X_k = 
\begin{cases}
  \ast &\text{if}~ k<n, \\
  M(G,n) &\text{if}~ n\leq k <m, \\
  X &\text{if} ~ k = m.
\end{cases} \]
Note that, for $k\geq n$, we have $n\leq \NAA(X^{(k)})$  since $X^{(k)}$ is $n-1$ connected. From Lemma \ref{ehhsn}(ii), we have $\NAA(X^{(k)})\leq \NAA(X_k)$ for $n < k <m$. Thus, 
$$n\leq \NAA(X^{(k)})\leq \NAA(X_k) = \NAA(M(G,n)) = n, ~~\text{for}~ n < k <m.$$ 

Note that $\AA^n(X) = \cdots = \AA^{m-1}(X)$ and $\Aut(X) = \AA^m(X) \subsetneq \AA^{m-1}(X)$, see Example \ref{exam}. Therefore, $\NAA(X) = m$

From Theorem \ref{hshd}(a), we have $\NAA(X^{(k)}) = \NAA(X)$ for all $k\geq m$. Hence, $\NAA(X^{(k)}) = \NAA(X) = m$ for $k\geq m$.
Consequently,
\[ \NAA(X^{(k)}) = 
\begin{cases}
	0 &\text{if}~ k<n, \\
	n &\text{if}~ n\leq k <m, \\
	m &\text{if}~ k\geq m.
	
\end{cases} \]

\end{example}

In general we have the following result.
\begin{proposition}\label{hsnmm}
Let $G_1,\cdots, G_m$ be finitely generated abelian groups. Consider $X= \displaystyle\bigvee^m_{i=1} M(G_i, n_i)$, where $n_{i+1}>n_i>1$. Then 
\[ \NAA(X^{(k)}) = 
\begin{cases}
	0 &\text{if}~ k<n_1, \\
	n_i &\text{if}~ n_i\leq k <n_{i+1}, \\
	n_m &\text{if} ~ k\geq n_m.
\end{cases} \]
\end{proposition}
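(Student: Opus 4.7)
The plan is to imitate the two‑summand example immediately preceding the proposition and iterate it. Write $Y_i := \bigvee_{j=1}^{i} M(G_j,n_j)$, so that $X = Y_m$. Because a wedge of Moore spaces has trivial homological $k$‑invariants, the homology sections are simply $X_k = Y_i$ whenever $n_i \leq k < n_{i+1}$ (with the convention $n_{m+1} = \infty$). For $k < n_1$ the space $X$ is $(n_1-1)$‑connected, so $X^{(k)}$ is weakly contractible and $\NAA(X^{(k)}) = 0$. The remaining case $n_i \leq k < n_{i+1}$ is where $\NAA(X^{(k)}) = n_i$ must be established; I would prove this by induction on $i$.

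A preliminary computation gives $\NAA(Y_i) = n_i$ exactly as in Example \ref{exam}: the bound $\NAA(Y_i) \leq n_i$ follows from $H_\ast\text{-}\dim(Y_i) = n_i$, while the reverse inequality is witnessed by the self‑map $\rho_i \colon Y_i \to Y_i$ which is the identity on $Y_{i-1}$ and the constant map on $M(G_i,n_i)$; this $\rho_i$ is an isomorphism on $H_j$ for $j < n_i$ but zero on $H_{n_i} \cong G_i$, so it lies in $\AA^{n_i-1}(Y_i)\setminus \Aut(Y_i)$.

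For the upper bound $\NAA(X^{(k)}) \leq n_i$ I would split into two cases. If $n_i < k < n_{i+1}$, then $H_k(X) = 0$ is free abelian, so Lemma \ref{ehhsn}(ii) yields $\NAA(X^{(k)}) \leq \NAA(X_k) = \NAA(Y_i) = n_i$. The borderline case $k = n_i$ requires more care because $H_{n_i}(X) = G_i$ can have torsion; I would apply Proposition \ref{ihsncs}, which says that either $\NAA(X^{(n_i)}) = n_i$ or $\NAA(X^{(n_i)}) \leq \NAA(X^{(n_i-1)})$. By the inductive hypothesis the latter quantity equals $n_{i-1}$ (or $0$ when $i=1$), and the lower bound established below eliminates this second alternative.

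The crux is the lower bound $\NAA(X^{(k)}) \geq n_i$. I would extend $\rho_i$ to a self‑map $\rho_i \colon X \to X$ (identity on the other wedge summands, constant on $M(G_i,n_i)$) and then lift it through the Postnikov structure map to obtain $\tilde\rho_i \colon X^{(k)} \to X^{(k)}$ with $\tilde\rho_i \circ g_k \simeq g_k \circ \rho_i$. Such a lift exists by the universal property of the Postnikov section: since $\pi_j(X^{(k)}) = 0$ for $j > k$, the map $g_k^{\ast} \colon [X^{(k)},X^{(k)}] \to [X,X^{(k)}]$ is a bijection. By Lemma \ref{leq}, $g_{k\ast} \colon H_j(X) \to H_j(X^{(k)})$ is an isomorphism for $j \leq k$, and the commuting square forces $\tilde\rho_{i\ast}$ to coincide with $g_{k\ast} \circ \rho_{i\ast} \circ g_{k\ast}^{-1}$ on $H_j$ for $j \leq k$. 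Hence $\tilde\rho_{i\ast}$ is an isomorphism for $j < n_i$ but zero on $H_{n_i}(X^{(k)}) \cong G_i \neq 0$, placing $\tilde\rho_i$ in $\AA^{n_i-1}(X^{(k)}) \setminus \Aut(X^{(k)})$. This supplies the lower bound and closes the induction; the case $k \geq n_m$ is subsumed by taking $i = m$ with $n_{m+1} = \infty$. The main obstacle is the borderline $k = n_i$: torsion in $G_i$ blocks a direct appeal to Lemma \ref{ehhsn}(ii), and the argument must route through Proposition \ref{ihsncs} and the inductive hypothesis.
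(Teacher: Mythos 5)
Your proposal is correct and takes essentially the same route as the paper: trivial homological $k$-invariants of the wedge give the homology sections, Lemma \ref{ehhsn}(ii) and Proposition \ref{ihsncs} supply the upper bounds, and the lower bound comes from pushing the ``collapse the top Moore summand'' self-map through the functorial Postnikov construction. The only cosmetic differences are that the paper runs the last step as a proof by contradiction (written out for $m=3$, then induction) and invokes Theorem \ref{hshd}(a) for $k\geq n_m$, whereas you exhibit the lifted collapse map directly as an element of $\AA^{n_i-1}(X^{(k)})\setminus\Aut(X^{(k)})$, which handles all $k\geq n_i$, including the tail, uniformly.
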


\begin{proof}
Note that $\AA^{n_{i+1}}(X)\subsetneq \AA^{n_i}(X)$ for $i = 1,\ldots, m-1$, see Example \ref{exam}. We have already proved for the case $m=2$. It is sufficient to prove the result for the case $m = 3$ and then, we can say that the result holds inductively. Let $X= \bigvee\limits_{i=1}^3 M(G_i, n_i).$ Observe that 
\[ X^{(k)} = 
\begin{cases}
	\ast &\text{ if } k<n_1,\\
	K(G_1,n_1) &\text{ if } k = n_1.
\end{cases} \]

Further, 
\[ X_k = 
\begin{cases}
	\ast &\text{ if } k < n_1,\\
	\bigvee\limits_{j=1}^i M(G_j, n_j) &\text{ if } n_i\leq k < n_{i+1},\\
	X &\text{ if } k = n_m.
\end{cases}
\]

Moreover, $X^{(k)}$ is $(n_1 - 1)$ connected for $k\geq n_1$. Thus, $n_1\leq \NAA(X^{(k)})$ for $k\geq n_1$. From Lemma \ref{ehhsn}(ii), we have $$n_1\leq \NAA(X^{(k)})\leq \NAA(X_k) = \NAA(X_{n_1}) = n_1, ~\text{for}~ n_1<k<n_2.$$

\noindent Using Proposition \ref{ihsncs}, we have $$\NAA(X^{(n_2)}) = n_2~ \text{or} ~\NAA(X^{(n_2)})\leq \NAA(X^{(n_2 - 1)}) = n_1.$$  
If possible, assume that $\AA^{n_1}(X^{(n_2)}) = \Aut(X^{(n_2)})$. Let $f\in \AA^{n_1}(X)$ but $f\notin \AA^{n_2}(X)$. By a functorial construction of Postnikov tower, there exist $f^{(n_2)}$ which makes the following diagram homotopy commutative ( cf. \cite[Proposition 7.2.11]{AIH} ).
\begin{align}\label{dia5}
\xymatrix{
	X \ar[dd]_{g_{n_2}} \ar[rr]^{f} && X \ar[dd]^{g_{n_2}} \\\\
	X^{(n_2)} \ar@{-->}[rr]_{\exists~ f^{(n_2)}} && X^{(n_2)}
}
\end{align}
Thus, $f^{(n_2)}\in \AA^{n_1}(X^{(n_2)}) = \Aut(X^{(n_2)})$. This contradicts the fact that $f\in \AA^{n_2}(X)$. Hence $\NAA(X^{(n_2)}) = n_2$. Further, using Lemma \ref{ehhsn}(ii), we have $$\NAA(X^{(k)})\leq \NAA(X_k) = \NAA(X_{n_2}) = n_2, ~\text{for}~ n_2< k <n_3.$$ 
If possible, assume that $\NAA(X^{(k)}) < n_2$ for some $k\in (n_2, n_3)$. Then $$\AA^{n_1}(X^{(k)}) = \cdots = \AA^{n_2 - 1}(X^{(k)}) = \Aut(X^{(k)}) ~(\text{since}~ H_j(X^{(k)}) \cong H_j(X)~ \forall j\leq k).$$ 
Let $h\in \AA^{n_1}(X)$ but $h\notin \AA^{n_2}(X)$. Then by the similar argument as above, we arrive at contradiction. Hence $\NAA(X^{(k)}) = n_2$ for $n_2< k <n_3$.

From Theorem \ref{hshd}(a), we have $\NAA(X^{(k)}) = \NAA(X) = n_3$ for all $k\geq n_3$. Consequently 
\[ \NAA(X^{(k)}) = 
\begin{cases}
	0 &\text{if}~ k<n_1, \\
	n_1 &\text{if} ~ n_1\leq k <n_2, \\
	n_2 &\text{if}~ n_2\leq k <n_3, \\
	n_3 &\text{if} ~ k\geq n_3.
\end{cases} \]

\end{proof}

\begin{example}
Consider $X = S^m\times S^n$, where $m>n>1$. Observe that 
\[ X^{(k)} =
\begin{cases}
 \ast &\text{if}~ k<n, \\
 K(\mathbb{Z},n) &\text{if}~ k= n.
\end{cases} \]
Therefore $\NAA(X^{(k)}) = 0$ for all $k<n$. Note that $X^{(k)}$ is $n-1$ connected for $k\geq n$. So $n\leq \NAA(X^{(k)})$ for all $k\geq n$. From  Lemma \ref{ehhsn}(ii), we have 
$$n\leq \NAA(X^{(k)})\leq \NAA(X_k) = \NAA(X_n) = \NAA(S^n) = n ~\text{for all}~n\leq k<m.$$

Observe that $\NAA(X) = \NAA(S^m\times S^n) = \NA(S^m\times S^n) = m$ ~(cf. \cite[Proposition 5]{OYSEC}).

Moreover, using Theorem \ref{hshd}(a), we have $\NAA(X^{(k)}) = \NAA(X)$ for all $k\geq m+n-1$. Therefore,
$$\NAA(X^{(k)}) = m ~\text{for all}~ k\geq m+n-1.$$

Further, for $m\leq k<m+n-1$, we have $m\leq \NAA(X^{(k)})$ from Theorem \ref{hshd}(b) and using Lemma \ref{ehhsn}(ii), we have $$\NAA(X^{(k)})\leq \NAA(X_k) = \NAA(X_m) = \NAA(S^m \vee S^n) = m.$$
Hence, $\NAA(X^{(k)}) = m$ for all $m\leq k<m+n-1$. Consequently, 
\[ \NAA(X^{(k)}) = 
\begin{cases}
	0 &\text{if}~ k<n, \\
	n &\text{if}~n\leq k <m, \\
	m &\text{if}~ k\geq m.
\end{cases} \]
\end{example}

\begin{example}
For $l > 0$, consider $X = \Sigma^l (S^m\times S^n)$, where $m > n >1$. Note that $$\Sigma (S^m\times S^n) \simeq S^{n+1} \bigvee S^{m+1} \bigvee S^{m+n+1} ~~\text{(cf. \cite[Proposition 4I.1]{AHAT})}.$$ 
Therefore, $X = S^{n+l} \bigvee S^{m+l} \bigvee S^{m+n+l}$. From Proposition \ref{hsnmm}, we have 
\[ \NAA(X^{(k)}) = 
\begin{cases}
	0 &\text{if} ~ k< n+l, \\
	n+l &\text{if} ~n+l\leq k <m+l, \\
	m+l &\text{if} ~m+l \leq k <m+n+l, \\
	m+n+l &\text{if} ~ k\geq m+n+l.
\end{cases} \]
\end{example}

\end{document}